\newtheorem{theorem}{Theorem}
\newtheorem{corollary}[theorem]{Corollary}
\newtheorem{definition}[theorem]{Definition}
\newtheorem{lemma}[theorem]{Lemma}
\newtheorem{proposition}[theorem]{Proposition}
\newtheorem{remark}[theorem]{Remark}
\newcommand{\un}{\hbox{\bf 1}}
\newcommand{\id}{id}
\def\racine{{\scalebox{0.3}{
\begin{picture}(12,12)(38,-38)
\SetWidth{0.5} \SetColor{Black} \Vertex(45,-30){6}
\end{picture}
}}}
 \def\arbrea{\,{\scalebox{0.15}{ 
  \begin{picture}(8,55) (370,-248)
    \SetWidth{2}
    \SetColor{Black}
    \Line(374,-244)(374,-200)
    \Vertex(374,-197){9}
    \Vertex(375,-245){12}
  \end{picture}
}}\,}
 \def\arbreba{\,{\scalebox{0.15}{ 
\begin{picture}(8,106) (370,-197)
    \SetWidth{2}
    \SetColor{Black}
    \Line(374,-193)(374,-149)
    \Vertex(374,-146){9}
    \Vertex(375,-194){12}
    \Line(374,-142)(374,-98)
    \Vertex(374,-95){9}
  \end{picture}
}}\,}
 \def\arbrebb{\,{\scalebox{0.15}{ 
  \begin{picture}(48,48) (349,-255)
    \SetWidth{2}
    \SetColor{Black}
    \Vertex(375,-252){12}
    \Line(376,-250)(395,-215)
    \Line(373,-251)(354,-214)
    \Vertex(353,-211){9}
    \Vertex(395,-213){9}
  \end{picture}
}}}
\def\arbreca{\,{\scalebox{0.15}{
\begin{picture}(8,156) (370,-147)
    \SetWidth{2}
    \SetColor{Black}
    \Line(374,-143)(374,-99)
    \Vertex(374,-96){9}
    \Vertex(375,-144){12}
    \Line(374,-92)(374,-48)
    \Vertex(374,-45){9}
    \Line(374,-42)(374,2)
    \Vertex(374,5){9}
  \end{picture}
}}\,}
\def\arbrecb{\,{\scalebox{0.15}{
\begin{picture}(48,94) (349,-255)
\SetWidth{2}
\SetColor{Black}
\Line(376,-204)(395,-169)
\Line(373,-205)(354,-168)
\Vertex(353,-165){9}
\Vertex(395,-167){9}
\Vertex(374,-205){9}
\Line(374,-246)(374,-209)
\Vertex(374,-252){12}
\end{picture}}}\,}
\def\arbrecc{\,{\scalebox{0.15}{
 \begin{picture}(48,98) (349,-205)
    \SetWidth{2}
    \SetColor{Black}
    \Vertex(375,-202){12}
    \Line(376,-200)(395,-165)
    \Line(373,-201)(354,-164)
    \Vertex(353,-161){9}
    \Vertex(395,-163){9}
    \Line(353,-160)(353,-113)
    \Vertex(353,-111){9}
  \end{picture}
}}\,}
\begin{document}

%%%%%%%%%%%%%%%%%%%%%%%%%%%%%%%%%%%%%%%

\title[${\mathsf{T}}$-ordering and pre-Lie algebras]{The pre-Lie structure of the time-ordered exponential} 

%%%%%%%%%%%%%%%%%%%%%%%%%%%%%%%%%%%%%%%

\author{Kurusch Ebrahimi-Fard}
\address{ICMAT,
		C/Nicol\'as Cabrera, no.~13-15, 28049 Madrid, Spain.
		On leave from UHA, Mulhouse, France.}
         \email{kurusch@icmat.es, kurusch.ebrahimi-fard@uha.fr}         
         \urladdr{www.icmat.es/kurusch}

\author{Fr\'ed\'eric Patras}
\address{Univ.~de Nice,
Labo.~J.-A.~Dieudonn\'e,
         		UMR 7351, CNRS,
         		Parc Valrose,
         		06108 Nice Cedex 02, France.}
\email{patras@math.unice.fr}
\urladdr{www-math.unice.fr/$\sim$patras}

%%%%%%%%%%%%%%%%%%%%%%%%%%%%%%%%%%%%%%%
\date{May 16, 2013}
%%%%%%%%%%%%%%%%%%%%%%%%%%%%%%%%%%%%%%%

%%%%%%%%%%%%%%%%%%%%%%%%%%%%%%%%%%%%%%%

\begin{abstract}
The usual time-ordering operation and the corresponding time-ordered exponential play a fundamental role in physics and applied mathematics. In this work we study a new approach to the understanding of time-ordering relying on recent progress made in the context of enveloping algebras of pre-Lie algebras. Various general formulas for pre-Lie and Rota--Baxter algebras are obtained in the process. Among others, we recover the noncommutative analog of the classical Bohnenblust--Spitzer formula, and get explicit formulae for operator products of time-ordered exponentials.
\end{abstract}

%%%%%%%%%%%%%%%%%%%%%%%%%%%%%%%%%%%%%%%

\maketitle
\tableofcontents

%%%%%%%%%%%%%%%%%%%%%%%%%%%%%%%%%%%%%%%

\section*{Introduction}
\label{sect:intro}

Time-ordering appears naturally in the resolution of matrix or operator linear differential equations, through the so-called time-ordered exponential (aka Picard, Chen or Dyson series). The algebraic structure of the time-ordered exponential has been studied intensively. A classical example is provided by the solution to the Baker--Campbell--Hausdorff problem (of computing the logarithm of the solution of a linear differential equation or, almost equivalently, the logarithm of a product of exponentials). 

In the present article, we address the problem of studying time-ordered exponentials and their natural generalizations to arbitrary Rota--Baxter algebras from a renewed point of view. Namely, we relate these structures to the one of enveloping algebras of pre-Lie algebras, taking advantage of recent advances made in the subject. This approach sheds new light on classical topics (such as the aforementioned Baker--Campbell--Hausdorff problem) but, above all, it leads to the derivation of new formulas as well as structural results for pre-Lie and Rota--Baxter algebras. In our opinion, it gives rise to changes in the conceptual understanding of associative Rota--Baxter algebras and their Lie--theoretical properties.

Let us recall briefly the background of these ideas. Pre-Lie algebras first appeared in the works of Gerstenhaber (in deformation theory) and of Vinberg (in differential geometry) in the early 1960s. Later they occurred also in mathematical control theory under the name of chronological algebras \cite{AG}. See \cite{Cartier2,Manchon} for concise reviews. After the seminal works by Connes and Kreimer \cite{CKII} in perturbative quantum field theory, and soon after by Chapoton and Livernet \cite{ChaLiv} in universal algebra and the theory of operads, it became clear that the notion of pre-Lie algebra plays also an important role in combinatorial algebra. 

Pre-Lie algebras are Lie admissible, that is, the antisymmetrized pre-Lie product defines a Lie bracket. The fruitful interplay between algebra and geometry, which is reflected in the notion of pre-Lie algebra becomes visible in the context  Butcher's theory of numerical integration methods \cite{HWL}. Here, based on Cayley's historical 1857 paper ``On the theory of analytic forms called trees'', rooted trees provide the basic entities. They encode algebraically the notion of flat and torsion free connections, i.e., the differential geometry of euclidean spaces.

The Butcher--Connes--Kreimer Hopf algebra of rooted trees is the dual of the Grossman--Larson Hopf algebra described in \cite{GL}. The elements of the Grossman--Larson algebra include the so-called Butcher group. It has received considerable attention during recent years, and provides the basis for a modern understanding of the pre-Lie structure underlying the algebra of differential operators. Guin and Oudom generalized in \cite{OudGui} the Grossman--Larson construction by showing that the symmetric algebra of any pre-Lie algebra can be equipped with an associative product giving rise to a Hopf algebra, which is isomorphic to the enveloping algebra of the Lie algebra coming from the pre-Lie algebra.         

In the present work we would like to put these findings to use in the context of associative Rota--Baxter algebras. Such algebras are defined in terms of linear maps that satisfy a modified integration by parts identity, known as Rota--Baxter relation. They naturally come equipped with a second associative product as well as a natural pre-Lie algebra structure. The latter results from the combination of integration by parts, or its generalization in terms of the Rota--Baxter identity, with the Jacobi identity. 

Following Rota \cite{Rota2,Rota3,Rota4}, in recent years the theory of Rota--Baxter algebras of arbitrary weight \cite{Baxter} has emerged as an appropriate setting for studying algebraic and combinatorial aspects underlying the notion of integration. See \cite{EP} for a review. Let us mention that in particular we are interested in a more detailed understanding of the classical notion of time-ordered exponential from an algebraic point of view. We emphasize that our results provide a refined and more conceptional understanding of the main part of \cite{EMP}, i.e., the noncommutative generalization of the classical Bohnenblust--Spitzer identity. This generalization follows from a non-trivial interplay between the aforementioned associative and pre-Lie structures on Rota--Baxter algebras. It matches Guin's and Oudom's construction of a Grossman--Larson type algebra out of a pre-Lie algebra. 

\smallskip

The paper is organized as follows. In Section \ref{sect:GLA} we recall the definition of the Grossman--Larson algebra and prove the key identity in Theorem \ref{FundThm}, relating commutative and (Grossman--Larson's) associative products of trees. Section \ref{sect:UPL} explains briefly how these results extend to arbitrary enveloping algebras of pre-Lie algebras. Closed formulas for the brace operations arising naturally from pre-Lie products are presented in Corollary \ref{cor:closed}. In the next section we describe explicitly the canonical embedding of the Grossman--Larson algebra into  the free Rota--Baxter algebra in Theorem \ref{thm:keyth}. From this we recover the noncommutative Bohnenblust--Spitzer formula. Section \ref{sect:ToBSp} investigates the fine combinatorial and pre-Lie structures of the time-ordered exponential. Section \ref{sect:Magnus} explains briefly how the links between pre-Lie and Rota--Baxter algebras shed new light on the continuous Baker--Campbell--Hausdorff problem and the related combinatorics of symmetric groups. In Section \ref{sect:prodToExp} we show how the product of time-ordered exponentials can be rewritten in the Grossman--Larson algebra as a proper time-ordered exponential in terms of symmetric braces. The classical BCH-formula in the context of pre-Lie algebra is recovered in this setting. 

\medskip     
 
\noindent In the following we fix $k$ to be a field of characteristic zero over which all algebraic structures are defined.

\vspace{0.4cm}

\noindent {\bf{Acknowledgements}}: The first author is supported by a Ram\'on y Cajal research grant from the Spanish government, as well as the project MTM2011-23050 of the Ministerio de Econom\'{i}a y Competitividad. The second author acknowledges support from the grant ANR-12-BS01-0017, Combinatoire Alg\'ebrique, R\'esurgence, Moules et Applications. Both authors were supported by the CNRS GDR Renormalisation.

%%%%%%%%%%%%%%%%%%%%%%%%%%%%%%%%%%%%%%%

\section{The Grossman--Larson algebra}
\label{sect:GLA}

Recall that a tree $t$ is a finite, non empty, connected and simply connect graph with a distinguished vertex called the root. 
Here $T$ denotes the set of rooted trees: 
$$
\racine\quad
\arbrea \quad
\arbreba \quad
\arbrebb \quad
\arbreca \quad 
\arbrecb \quad
\arbrecc \quad  
\cdots
$$
Let ${\mathcal D}$ be an arbitrary set. By a (${\mathcal D}$-)decorated tree we mean that each vertex (including the root vertex) of a tree carries a decoration by an element of ${\mathcal D}$; equivalently a map is given from the set of vertices of the tree $t \in T$ to ${\mathcal D}$. From now on, all trees (and related objects) will carry decorations (we will therefore omit to mention that they are ${\mathcal D}$-decorated).

The grafting of a tree $t'$ on another tree $t$ results in an element of the linear span $\mathcal{T}$ of the set $T$ of trees. It is the sum written $t \curvearrowleft t'$ of all the trees obtained by grafting (i.e.~creating an edge linking) the root of $t'$ with an arbitrary vertex of $t$. A forest $F=t_1 \cdots t_n$ is a commutative product of trees $t_i$ (or equivalently, an element of the free commutative semigroup generated by the trees). The weight of a forest, $w(F)$, is the number of trees in the forest, i.e., for $F=t_1 \cdots t_n$, $w(F)=n$.

The linear span of the set of forests, written $GL^+$, is naturally equipped with two products. The first one, written $\circ$ (which we omit, as it is usually done in the case of polynomial algebras) is associative and commutative. It is simply the bilinear extension of the commutative product of forests, i.e., for $F=t_1 \cdots t_n$, $F' = t_1' \cdots t_m'$, 
$$
	F \circ F'=FF':=t_1 \cdots t_n t_1'\cdots t_m'.
$$
Note that $w(FF')=n+m$. The second one, called Grossman--Larson product, and written $\ast$, is associative, but not commutative. It is defined as follows:
\begin{equation}
\label{gsProduct}
	(t_1 \cdots t_n) \ast (t_1' \cdots t_m') = \sum\limits_f F_0(t_1\curvearrowleft F_1) \cdots (t_n\curvearrowleft F_n),
\end{equation}
where the sum is over all functions $f$ from $\{1,\ldots ,m\}$ to $\{0,\ldots,n\}$ and $F_i:=\prod_{j\in f^{-1}(i)}t_j'$. Notice that, in this definition, we used the fact that $GL^+$ is also equipped with a natural action on the linear span of trees extending the product $\curvearrowleft$. This action can be defined recursively, together with the $\ast$ product using the identity: 
\begin{equation}
\label{envact}
	t\curvearrowleft (F\ast G):=(t\curvearrowleft F)\curvearrowleft G,
\end{equation}
where $t$ is any tree and $F,G$ are arbitrary forests. As an example:
$$
	t \curvearrowleft (t' t''):=(t\curvearrowleft t')\curvearrowleft t'' - t\curvearrowleft (t'\curvearrowleft t'').
$$
Conceptually, this follows from the fact that the product $\ast$ equips $GL^+$ with the structure of an enveloping algebra over $\mathcal{T}$, equipped with the Lie bracket $[t,t']:=t\curvearrowleft t'-t'\curvearrowleft t$ --we postpone explanations and refer to the next section for details. 

Let us mention that the coproduct dual to the product $\ast$ is easy to describe graphically in terms of cuts on the branches of trees: it is the ``sum over all admissible cuts'' in the terminology of Connes and Kreimer; we refer the interested reader to \cite{CKII}  for further details. The algebra $(GL,\ast)$, named Grossman--Larson algebra, is the unital algebra obtained by adjoining a unit to $GL^+$. It is sometimes convenient to treat this unit as a tree -- which is then referred to as the empty tree and denoted by $e$.

Following \cite{OudGui} we can further extend the aforementioned action of forest on trees to all of  $GL$ in terms of the following

\begin{definition} \label{def:forest}
Let $F,G,H \in GL$, then we define $F \curvearrowleft e := F$, and:
\begin{eqnarray}
\label{forestaction}	
	FG \curvearrowleft H &:=& (F \curvearrowleft H_1)(G\curvearrowleft H_2).
\end{eqnarray}
\end{definition} 

We used Sweedler's notation for the coproduct defined naturally on the forest $H \in GL$, $\Delta(H):=H_1 \otimes H_2$, which is simply deshuffling, i.e., trees in $T$ are defined to be primitive elements. For details we refer the reader to \cite{OudGui}.    

Let us return to (\ref{gsProduct}). Direct inspection shows that, for forests $F$ and $F'$ as above, $F \ast F' = FF' + R$, where the reminder is a sum of forests of weight strictly less than $w(FF')=n+m$. Let us order arbitrarily the set of trees. It follows then, by the usual triangularity argument used to construct the Poincar\'e--Birkhoff--Witt (PBW) basis of enveloping algebras (see, e.g., \cite{reutenauer}), that the set of products $t_1\ast \cdots \ast t_n$, where $t_1\leq \cdots \leq t_n$, forms a basis of $GL$.

In this section, we aim at expanding forests as linear combinations of iterated Grossman--Larson products of trees. Besides being of general interest with respect to the $GL$ algebra, the motivation for such a computation comes from the theory of Rota--Baxter algebras and related identities, such as the Bohnenblust--Spitzer formula \cite{EMP}. This will be explained further below in later sections of the present article.

\begin{theorem}\label{FundThm}
For $t_1,\ldots ,t_n$ trees in $GL$, we have:
 \begin{equation}
 \label{Fondam}
	t_1\cdots t_n = \sum\limits_{P_1,\ldots ,P_k} (-1)^{n-k} t_{P_1}\ast \cdots \ast t_{P_k},  
 \end{equation}
where $P:=\{P_1, \ldots,P_k\}$ runs over the partitions of $[n]=\{1,\ldots, n\}$, i.e., $P_1\coprod \cdots \coprod P_k=[n]$, such that $\sup(P_i)>j$  $\forall j \in P_k$, $k<i$. Besides, for $P_i=\{p_1^i,\ldots ,p_h^i\}$, where the $p_j^i$ are in natural order, we set:
$$
	t_{P_i}:=\sum_{\sigma\in S_{h-1}} t_{p_{\sigma(1)}^i} \curvearrowleft (t_{p_{\sigma(2)}^i} \curvearrowleft 
	( \cdots \curvearrowleft (t_{p_{\sigma(h-1)}^i}\curvearrowleft t_{p_{h}^i})\cdots ).
$$
\end{theorem}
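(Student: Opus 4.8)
The plan is to prove identity (\ref{Fondam}) by induction on $n$. The case $n=1$ is trivial: the only admissible partition of $[1]$ is $P=\{\{1\}\}$ and $t_{\{1\}}=t_1$. For the inductive step the key observation is the elementary consequence of (\ref{gsProduct}) (its case $m=1$): for a forest $F$ and a tree $t$ one has $F\ast t = Ft + F\curvearrowleft t$, where $F\curvearrowleft t$ is understood via (\ref{forestaction}) with the primitive element $t$. Consequently
$$
	t_1\cdots t_n \;=\; (t_1\cdots t_{n-1})\ast t_n \;-\; (t_1\cdots t_{n-1})\curvearrowleft t_n,
$$
and it suffices to match the two terms on the right with the corresponding portions of the sum in (\ref{Fondam}), sorted according to whether the block containing $n$ is the singleton $\{n\}$ or larger.

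For the first term, one inserts the induction hypothesis for $t_1\cdots t_{n-1}$ and $\ast$-multiplies each summand on the right by $t_n=t_{\{n\}}$. A sup-increasing ordered partition $Q=(Q_1,\dots,Q_l)$ of $[n-1]$ yields $(Q_1,\dots,Q_l,\{n\})$, which is again sup-increasing since $\sup Q_l\le n-1<n$, and the sign $(-1)^{(n-1)-l}$ turns into $(-1)^{n-(l+1)}$, the sign of the new partition. Every admissible partition of $[n]$ with last block $\{n\}$ arises exactly once this way, so $(t_1\cdots t_{n-1})\ast t_n$ is precisely the part of (\ref{Fondam}) with $P_k=\{n\}$.

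The heart of the argument is the second term $-(t_1\cdots t_{n-1})\curvearrowleft t_n$, which has to reproduce the remaining terms of (\ref{Fondam}), those whose last block $P_k$ contains $n$ and at least one further index. Again I would substitute the induction hypothesis and reduce to an auxiliary identity for $(t_{Q_1}\ast\cdots\ast t_{Q_l})\curvearrowleft t_n$, where $Q$ is a sup-increasing ordered partition of $[n-1]$. The expected behaviour is that grafting $t_n$ redistributes over the factors, the net effect on the $b$-th factor being to replace the symmetric brace $t_{Q_b}$ by $t_{Q_b\cup\{n\}}$; since $n$ exceeds every element of $Q_b$, the enlarged block has the largest supremum, hence must be moved to the final slot, and together with the sup-increasing order of the remaining blocks this produces exactly the admissible ordered partitions $P$ of $[n]$ with $|P|=|Q|$, carrying the sign $-(-1)^{(n-1)-|Q|}=(-1)^{n-|P|}$, as required. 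The main obstacle is to establish this auxiliary identity: $\curvearrowleft t_n$ is \emph{not} a derivation of $\ast$, and $t_n$ does not simply attach to one block, because the grafting action is non-associative --- recall $t\curvearrowleft(t't'')=(t\curvearrowleft t')\curvearrowleft t''-t\curvearrowleft(t'\curvearrowleft t'')$. One must therefore follow the correction terms generated when $t_n$ is commuted past the nested graftings inside a brace and past the $\ast$-factors, and show that they telescope; this is exactly the place where the full symmetrization over $S_{h-1}$ in the definition of $t_{P_i}$ is indispensable, and it mirrors the fact, recalled in the next section, that $(GL,\ast)$ is the enveloping algebra of $(\mathcal T,\curvearrowleft)$ in the sense of Guin and Oudom. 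A workable way to implement this step is to use (\ref{envact}) repeatedly to rewrite the iterated graftings occurring in the braces as $\ast$-products, thereby transferring all non-associativity into identities between associative products, and then to run a secondary induction on the size of the block that absorbs $t_n$; an alternative would be to first expand a generic $\ast$-product of trees into a sum of forests and then invert, but since that expansion naturally involves left-nested rather than symmetrized braces, the inversion is itself a non-trivial reorganization and does not shorten the argument.
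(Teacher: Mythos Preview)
Your opening decomposition $t_1\cdots t_n=(t_1\cdots t_{n-1})\ast t_n-(t_1\cdots t_{n-1})\curvearrowleft t_n$ and your treatment of the first term coincide with the paper's proof. The divergence is in the second term, and there you make the argument much harder than necessary and leave the key step unproved.

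You substitute the induction hypothesis for $t_1\cdots t_{n-1}$ \emph{inside} the expression $(t_1\cdots t_{n-1})\curvearrowleft t_n$, which forces you to analyse how $\curvearrowleft t_n$ acts on $\ast$-products of braces $t_{Q_1}\ast\cdots\ast t_{Q_l}$. As you yourself note, $\curvearrowleft t_n$ is not a derivation of $\ast$, and the ``expected behaviour'' you describe---enlarge one block by $n$ and move it to the last slot---is \emph{false} term by term: already for a single block $Q_1=\{1,2\}$ one has $(t_1\curvearrowleft t_2)\curvearrowleft t_3\neq t_{\{1,2,3\}}$. The telescoping you invoke therefore only takes place across the full signed sum over all $Q$, and you do not carry it out; the sketch with ``secondary induction on the size of the block'' and repeated use of~(\ref{envact}) is not a proof.

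The paper sidesteps this entire difficulty by reversing the order of the two operations. Since $t_n$ is primitive, $\curvearrowleft t_n$ \emph{is} a derivation of the commutative product, so one first expands
\[
(t_1\cdots t_{n-1})\curvearrowleft t_n \;=\; \sum_{i=1}^{n-1} t_1\cdots t_{i-1}\,(t_i\curvearrowleft t_n)\,t_{i+1}\cdots t_{n-1},
\]
and only \emph{then} applies the induction hypothesis to each summand, which is again a commutative product of $n-1$ trees. Using commutativity one moves $t_i\curvearrowleft t_n$ to the last slot and relabels the factors as $U_1,\dots,U_{n-1}$ with $U_{n-1}:=t_i\curvearrowleft t_n$. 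The induction hypothesis applied to $U_1\cdots U_{n-1}$ now does all the work: the last block of every resulting partition contains the $U$-index $n-1$, hence in $t$-language contains $n$; the right-nested shape $t_{p_{\sigma(1)}}\curvearrowleft(\cdots\curvearrowleft(t_i\curvearrowleft t_n))$ of the last brace is produced automatically by the recursion; and summing over the choice of $i\in[n-1]$ reconstitutes the full $S_{h-1}$-symmetrisation in the definition of $t_{P_k}$. No auxiliary identity about $\ast$ and no secondary induction are needed. The moral is: apply the derivation property \emph{before} the induction hypothesis, not after.
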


\begin{proof}
The proof follows by induction of $n$. Notice first that:
$$
	t_1\cdots t_n = (t_1\cdots t_{n-1})\circ t_n 
	= (t_1\cdots t_{n-1}) \ast t_n - \sum\limits_{i=1}^{n-1}t_1\cdots t_{i-1}(t_i \curvearrowleft t_n)\cdots t_{n-1}.
$$
The first term on the right hand side corresponds (by induction) to the terms such that the block $P_k=\{n\}$ in the above formula. The other terms can be rewritten:
$$
	t_1\cdots t_{i-1}(t_i\curvearrowleft t_n)\cdots t_{n-1}
	=t_1\cdots t_{n-1}(t_i\curvearrowleft t_n)=: U_1 \cdots U_{n-1}
$$
(where $U_1=t_1,\ldots ,U_{i-1}=t_{n-1},U_i=t_{i+1},\ldots ,U_{n-1}=t_i \curvearrowleft t_n$). The statement follows by substituting these values of the $U_i$s in the expansion of $U_1\cdots U_{n-1}$.
\end{proof}

Two remarks are in order. First, note that since the left hand side of the formula is $S_n$-invariant, other expansions could be obtained by using different parametrizations of the products (e.g., rewriting $t_1\cdots t_n=t_n \cdots t_1$, and starting the recursion with respect to $t_1$ would produce a different expansion). Other, less elegant, expansions can also be obtained by using the rewriting $t_1\cdots t_{i-1}(t_i\curvearrowleft t_n)\cdots t_{n-1}=:U_1\cdots U_{n-1}$ instead of the one we used.  Second, identity (\ref{Fondam}) is somehow surprising, since the left had side is evidently invariant by permutations of the trees, whereas this is much less obvious for the expression on the right hand side.

%%%%%%%%%%%%%%%%%%%%%%%%%%%%%%%%%%%%%%%

\section{Enveloping algebras of pre-Lie algebras}
\label{sect:UPL}

The previous theorem applies to two different situations, which are equally interesting on their own, since each of them encompasses important application domains, i.e., pre-Lie algebras and Rota--Baxter algebras. Let us consider first pre-Lie algebras and their enveloping algebras.

Recall that a right pre-Lie algebra is a vector space $L$ endowed with a bilinear product $\curvearrowleft$ : $L \times L \longrightarrow L$ satisfying the relation: 
$$
	(x \curvearrowleft y) \curvearrowleft z - x \curvearrowleft (y \curvearrowleft z) 
	= (x \curvearrowleft z) \curvearrowleft y - x \curvearrowleft (z \curvearrowleft y),\qquad \forall x, y, z \in L.
$$
The case of left pre-Lie algebra follows from exchanging $x$ and $y$ instead of $y,z$. Note that $L$ is Lie admissible, i.e., the bracket defined by antisymmetrization:
$$
	[a, b] := a \curvearrowleft b - b \curvearrowleft a,\qquad \forall a, b \in L
$$
endows $L$ with the structure of Lie algebra. Note that a commutative pre-Lie algebra is associative.

The $\curvearrowleft$ product of trees introduced in the last section happens to define the structure of a free pre-Lie algebra on the linear span of trees. This was proven by Chapoton and Livernet in the seminal work \cite{ChaLiv2}. It can also be deduced from the description of free pre-Lie algebras by Agrachev and Gamkrelidze in \cite{AG}. As a vector space, $GL$ is isomorphic to the symmetric algebra over $\mathcal T$. It happens so that the product $\ast$ equips $GL$ with the structure of an enveloping algebra over $\mathcal T$. This phenomenon is specific to pre-Lie algebras. Indeed, usually, the Poincar\'e--Birkhoff--Witt theorem states that the symmetric algebra $S(L)$ over a Lie algebra $L$ is naturally isomorphic to $U(L)$, the enveloping algebra of $L$, but there is no simple formula allowing to transfer the associative product on $U(L)$ to $S(L)$. As a corollary of this interpretation of $GL$ as an enveloping algebra, it follows that there is a natural action of $GL$ on $\mathcal T$ extending the pre-Lie product. This is nothing but the action described by equation (\ref{envact}) in the previous section. Note that this action is also written using the brace notation, $\{l;l_1\cdots l_n\}:=l\curvearrowleft (l_1\cdots l_n)$, or $l\{l_1\cdots l_n\}$ (where $l$ as well as the $l_i$ belong to $\mathcal T$). This is naturally motivated by (\ref{envact}) and the enveloping algebra structure of $GL$, which yields:
\begin{eqnarray*}	
	\{\{l;l_1 \cdots l_n\};{p_1 \cdots p_m}\}
	&=&(l\curvearrowleft (l_1 \cdots l_n))\curvearrowleft (p_1 \cdots p_m)\\
	&=&l\curvearrowleft (l_1 \cdots l_n\ast p_1 \cdots p_m)
	=\sum\limits_{f}l\curvearrowleft (P_0(l_1\curvearrowleft P_1) \cdots (l_n\curvearrowleft P_n)),
\end{eqnarray*}	
with the (self-explaining adaptation of the) notation of equation (\ref{envact}). So that finally we obtain:
$$
	\{\{l;l_1 \cdots l_n\};{p_1 \cdots p_m}\}=\sum\limits_{f}\{l;P_0(l_1\curvearrowleft P_1) \cdots (l_n\curvearrowleft P_n)\},
$$
the defining relation of symmetric brace algebras as stated in \cite{OudGui} (see also e.g.~\cite{lada}). We refer to \cite{OudGui} for further insights and details on the subject. 

Actually, since $\mathcal T$ is free as a pre-Lie algebra, the construction of the Grossman--Larson algebra can be extended to construct the enveloping algebra of $L$ for an arbitrary pre-Lie algebra $L$. That is, the algebra $S(L)$ of polynomials over $L$ can be equipped with an associative product $\ast$ making $(S(L),\ast)$ an enveloping algebra. The product law is given by formula (\ref{gsProduct}). See \cite{OudGui} for details.

Since the pre-Lie algebra of trees is free, it also follows that formulae in the $GL$ algebra are universal: they hold for the enveloping algebra of an arbitrary pre-Lie algebra. In particular, formula (\ref{Fondam}) holds in this setting.

\begin{proposition}
\label{prop:Envel}
For an arbitrary pre-Lie algebra $L$ and arbitrary elements $l_1,\ldots ,l_n$ of $L$, we have (in the Guin--Oudom presentation of the enveloping algebra of $L$ as $S(L)$ equipped with the $\ast$ product) the identity:
$$ 
	l_1\cdots l_n = \sum\limits_{P_1,\ldots ,P_k} (-1)^{n-k}l_{P_1}\ast \cdots \ast l_{P_k},  
$$
with the same notation for the $P_i$ and the $l_{P_i}$ as in Theorem~\ref{FundThm}.
\end{proposition}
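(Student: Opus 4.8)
The plan is to reduce Proposition~\ref{prop:Envel} to Theorem~\ref{FundThm} by a universality argument, exploiting the freeness of the pre-Lie algebra of rooted trees. First I would recall the setup from Section~\ref{sect:UPL}: the linear span $\mathcal{T}$ of $\mathcal{D}$-decorated rooted trees is the \emph{free} pre-Lie algebra on the set $\mathcal{D}$ (Chapoton--Livernet), and by the Guin--Oudom construction the symmetric algebra $S(\mathcal{T})$ equipped with the $\ast$ product is precisely $GL$, the enveloping algebra. The key point is that both sides of the claimed identity are built entirely from the pre-Lie operations $\curvearrowleft$ (inside each $l_{P_i}$) and the induced associative product $\ast$ on $S(L)$ (between the blocks), together with the commutative product on $S(L)$ on the left-hand side — and all three of these operations are natural with respect to pre-Lie algebra morphisms.

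The main steps would be as follows. Given an arbitrary pre-Lie algebra $L$ and elements $l_1,\ldots,l_n \in L$, choose a decoration set $\mathcal{D}$ of cardinality $n$, say $\mathcal{D}=\{1,\ldots,n\}$, and let $\bullet_i \in \mathcal{T}$ denote the one-vertex tree decorated by $i$. By freeness of $\mathcal{T}$, there is a unique pre-Lie algebra morphism $\varphi: \mathcal{T} \to L$ with $\varphi(\bullet_i) = l_i$. By the Guin--Oudom functoriality (the enveloping algebra construction $S(-),\ast$ is functorial in the pre-Lie algebra, and a pre-Lie morphism induces an algebra morphism of enveloping algebras intertwining both the $\ast$ products and the commutative products, since $S(\varphi)$ is the symmetric-algebra functor applied to $\varphi$), we get an algebra morphism $\Phi = S(\varphi): (S(\mathcal{T}),\ast) \to (S(L),\ast)$ which is also a morphism for the commutative products and which sends $\bullet_i \mapsto l_i$. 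Now apply $\Phi$ to the identity of Theorem~\ref{FundThm} with $t_i = \bullet_i$. On the left we obtain $\Phi(\bullet_1 \cdots \bullet_n) = l_1 \cdots l_n$. On the right, $\Phi$ is multiplicative for $\ast$, so $\Phi(t_{P_1} \ast \cdots \ast t_{P_k}) = \Phi(t_{P_1}) \ast \cdots \ast \Phi(t_{P_k})$; and since $\Phi$ restricted to $\mathcal{T}$ equals $\varphi$, which is a pre-Lie morphism commuting with $\curvearrowleft$ and with $\ast$-actions on elements of $L$, we get $\Phi(t_{P_i}) = l_{P_i}$ where $l_{P_i}$ is defined by the same symmetrized nested-$\curvearrowleft$ formula with $l_{p^i_j}$ in place of $t_{p^i_j}$. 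The indexing set of partitions $P$ is unchanged. This yields exactly the asserted identity in $S(L)$.

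The one point requiring genuine care — and the step I expect to be the main (albeit mild) obstacle — is verifying that $S(\varphi)$ really is an algebra morphism for the $\ast$ products, i.e., the functoriality of the Guin--Oudom construction. This is not quite automatic from the symmetric-algebra functor alone: one must check that the defining formula (\ref{gsProduct}) for $\ast$, together with the recursive action (\ref{envact}) and the extended forest action of Definition~\ref{def:forest}, is compatible with $S(\varphi)$. Concretely, one argues by induction on weights that $\varphi(t \curvearrowleft F) = \varphi(t) \curvearrowleft S(\varphi)(F)$ for a tree $t$ and a forest $F$ — the base case is that $\varphi$ is a pre-Lie morphism, and the inductive step uses (\ref{envact}) and the fact that $S(\varphi)$ respects the commutative product and the deshuffle coproduct — whence formula (\ref{gsProduct}) shows $S(\varphi)(F \ast F') = S(\varphi)(F) \ast S(\varphi)(F')$. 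This compatibility is exactly what is recorded in \cite{OudGui}, so I would simply cite it; the rest of the argument is the formal transport described above. An alternative, self-contained route that avoids invoking functoriality explicitly: observe that every term on both sides of (\ref{Fondam}) is a fixed universal expression in the pre-Lie generators $t_i$ formed using only $\curvearrowleft$ and the derived operations, so the identity, once known for the free pre-Lie algebra on $n$ generators, is a polynomial identity in the operad governing pre-Lie algebras and therefore holds in any pre-Lie algebra after specializing the generators to $l_1,\ldots,l_n$.
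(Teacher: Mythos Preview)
Your proposal is correct and follows essentially the same approach as the paper: the paper's argument is simply that since $\mathcal{T}$ is the free pre-Lie algebra, identities proven in $GL$ are universal and hold in the Guin--Oudom enveloping algebra of any pre-Lie algebra, which is exactly your freeness/functoriality transport. Your write-up is more explicit about the functoriality of $S(\varphi)$ with respect to $\ast$ (which the paper simply delegates to \cite{OudGui}), but the underlying idea is identical.
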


\begin{corollary}\label{cor:closed}
In particular, we get for the symmetric braces $\{l;l_1\cdots l_n\}$ the expansion:
$$
	\{l;l_1\cdots l_n\} = \sum_{P_1,\ldots ,P_k}(-1)^{n-k}
	(\cdots ((l\curvearrowleft l_{P_1})\curvearrowleft l_{P_2}) \cdots \curvearrowleft l_{P_k}).
$$
\end{corollary}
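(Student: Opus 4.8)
The plan is to derive Corollary~\ref{cor:closed} directly from Proposition~\ref{prop:Envel} by applying the brace operation, exploiting the compatibility identity \eqref{envact} between the pre-Lie product and the Grossman--Larson product $\ast$. The key observation is that $\{l; l_1\cdots l_n\} = l \curvearrowleft (l_1\cdots l_n)$ by the very definition of the symmetric braces, so we only need to understand what happens when we act with $l \curvearrowleft (-)$ on the right-hand side of Proposition~\ref{prop:Envel}.

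First I would substitute the identity of Proposition~\ref{prop:Envel} into the argument of the brace:
$$
	\{l; l_1\cdots l_n\} = l \curvearrowleft (l_1\cdots l_n)
	= l \curvearrowleft \Bigl( \sum_{P_1,\ldots,P_k} (-1)^{n-k}\, l_{P_1}\ast \cdots \ast l_{P_k} \Bigr)
	= \sum_{P_1,\ldots,P_k} (-1)^{n-k}\, \bigl( l \curvearrowleft (l_{P_1}\ast\cdots\ast l_{P_k}) \bigr),
$$
using linearity of the action in the second variable. Next I would iterate the fundamental compatibility relation \eqref{envact}, namely $t\curvearrowleft(F\ast G) = (t\curvearrowleft F)\curvearrowleft G$, which extends verbatim to an arbitrary enveloping algebra of a pre-Lie algebra (this is part of the Guin--Oudom construction recalled in Section~\ref{sect:UPL}, and is exactly what makes $S(L)$ an enveloping algebra acting on $L$). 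Applying it $k-1$ times gives
$$
	l \curvearrowleft (l_{P_1}\ast\cdots\ast l_{P_k}) = (\cdots((l\curvearrowleft l_{P_1})\curvearrowleft l_{P_2})\cdots \curvearrowleft l_{P_k}),
$$
and substituting this back yields precisely the claimed formula. One should note that each $l_{P_i}$ is itself a single element of $L$ (the pre-Lie algebra), being the symmetrized left-iterated pre-Lie product $\sum_{\sigma\in S_{h-1}} l_{p^i_{\sigma(1)}}\curvearrowleft(\cdots)$ from Theorem~\ref{FundThm}, so the expressions $l\curvearrowleft l_{P_i}$ make sense as ordinary pre-Lie products and the nested parenthesization on the right is unambiguous.

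There is really no serious obstacle here; the corollary is a formal consequence of the proposition together with the associativity-type relation \eqref{envact}. The only point requiring a line of care is the legitimacy of pulling $l\curvearrowleft(-)$ through the sum and through the $\ast$-products: linearity handles the sum, while \eqref{envact} (together with the fact that it holds universally, since $\mathcal{T}$ is the free pre-Lie algebra) handles the products. I would therefore keep the proof to two or three lines, simply pointing to Proposition~\ref{prop:Envel} and the iterated use of \eqref{envact}, and remark that the elements $l_{P_i}$ lie in $L$ so that the right-hand side is a genuine nested pre-Lie expression.
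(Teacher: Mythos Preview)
Your proposal is correct and follows essentially the same approach as the paper: substitute Proposition~\ref{prop:Envel} into $l\curvearrowleft(l_1\cdots l_n)$ and then iterate the compatibility relation \eqref{envact} to unfold $l\curvearrowleft(l_{P_1}\ast\cdots\ast l_{P_k})$ as the nested pre-Lie product. The paper's proof is literally these two lines, so your write-up matches it almost verbatim.
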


The notation is as above. In particular, the sum runs over partitions of $[n]$ obeying the particular statistics of Theorem~\ref{FundThm}. Indeed, we have:
\begin{eqnarray*}
	\{l;l_1\cdots l_n\} &=& \sum_{P_1,\ldots ,P_k}(-1)^{n-k}l\curvearrowleft (l_{P_1}\ast \cdots \ast l_{P_k})\\
			   &=& \sum_{P_1,\ldots ,P_k}(-1)^{n-k}(\cdots ((l\curvearrowleft l_{P_1})\curvearrowleft l_{P_2}) 
			   		\cdots \curvearrowleft l_{P_k}),
\end{eqnarray*}

%%%%%%%%%%%%%%%%%%%%%%%%%%%%%%%%%%%%%%%%%

\section{Grossmann--Larson and Rota--Baxter algebras}
\label{sect:RBA}

First, this section recalls briefly the notion of Rota--Baxter algebra. A particularly interesting topic from this perspective is the noncommutative Bohnenblust--Spitzer identity of \cite{EMP}. The commutative version of this identity plays a key role in Rota's approach to (Rota--)Baxter algebras by means of symmetric functions (see, e.g., the introduction \cite{Rota1}). Similarly, the identity is central in the correspondence we establish between enveloping algebras of pre-Lie algebras and Rota--Baxter algebras. In particular, we will show how the noncommutative Bohnenblust--Spitzer identity follows naturally from our previous results.

\smallskip

Recall the definition of a unital Rota--Baxter algebra of weight $\theta \in k$ \cite{Baxter,EMP,EP,Rota1,Rota2}. It is an associative algebra $A$ with unit $\un$, equipped with a linear endomorphism such that for all $x,y \in A$: 
\begin{equation}
\label{RBeq}
	R(x)R(y) = R(R(x)y + xR(y)) + \theta R(xy).
\end{equation}
Note that the map $\tilde{R}:=-\theta \id - R$ as well satisfies (\ref{RBeq}), and that both images $R(A)$ and $\tilde{R}(A)$ are subalgebras of $A$. Natural examples of weights zero and non-zero are the indefinite Riemann integral, denoted $I$, and its corresponding discrete Riemann summation operators, respectively. Another class of examples are orthogonal projectors $\pi_{\mp}$ to subalgebras $A_-=\pi_-(A)$ respectively $A_+=(\id-\pi_-)(A)$, corresponding to a direct decomposition $A=A_+ \oplus A_-$. Note that in the case of the weight zero Riemann integral $\tilde{I}=-I$. We write $RB$ for the free Rota--Baxter algebra over a set of generators $\mathcal D$. Constructions of free Rota--Baxter algebra have been given in \cite{AgMo,EG,EGBP,EMP}. Baxter, Rota and Cartier described the free commutative Rota--Baxter algebra much earlier \cite{Cartier,LG,Rota1,Rota2}. Especially Cartier contributed to its understanding by using quasi-shuffle like products \cite{Hoffman} in the construction of free commutative $RB$. We refer to \cite{EP,Rota2,Rota3} for more details. 

The following simple observations play a crucial role in applications of Rota--Baxter algebras.

\begin{lemma} 
\label{lem:pre-LieRB}
Let $(A,R)$ be an associative Rota--Baxter algebra of weight $\theta$. The two binary operations:
 \allowdisplaybreaks{
\begin{eqnarray}
	a \triangleleft_\theta b 	&:=& aR(b) - R(b)a + \theta ab = [a,R(b)] + \theta ab, 		\label{lem:rightRBpre-Lie}\\
 	a \triangleright_\theta b 	&:=& R(a)b - bR(a) - \theta ba = [R(a),b] - \theta ba 	 	\label{lem:leftRBpre-Lie}
\end{eqnarray}}
define right respectively left pre-Lie structures on $A$.
\end{lemma}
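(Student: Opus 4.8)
The plan is to verify directly that $\triangleleft_\theta$ satisfies the right pre-Lie identity by expanding both sides using only the defining Rota--Baxter relation (\ref{RBeq}), the associativity of $A$, and the Jacobi identity for the commutator. Since the left pre-Lie case follows from the right one by swapping the roles of the first two arguments (as noted right after the definition of pre-Lie algebra in Section \ref{sect:UPL}, applied to $\triangleright_\theta$ via $a \triangleright_\theta b$ having the same shape as $b \triangleleft_\theta a$ up to signs), I would concentrate on $\triangleleft_\theta$ and remark at the end that the computation for $\triangleright_\theta$ is entirely parallel (or obtain it from $\triangleleft_\theta$ applied to the opposite algebra $A^{\mathrm{op}}$ with the same $R$).

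First I would write out the associator
$$
a_\theta(a,b,c) := (a \triangleleft_\theta b) \triangleleft_\theta c - a \triangleleft_\theta (b \triangleleft_\theta c),
$$
substituting $a \triangleleft_\theta b = [a,R(b)] + \theta ab$ twice. The key point is that the "purely associative" pieces and the pieces where $R$ never hits a product will already be symmetric in $b$ and $c$ by the Jacobi identity and associativity alone, so the whole content of the lemma is concentrated in the terms involving $R(b)R(c)$ (and $R(b \triangleleft_\theta c)$, i.e.\ $R$ applied to a commutator times $R$, plus $\theta R(bc)$). Precisely, after expansion one gets a term $R(b)R(c)$ appearing with some coefficient inside a commutator with $a$, and a term $R\big(b \triangleleft_\theta c\big) = R\big([b,R(c)]\big) + \theta R(bc)$ appearing the same way; here is exactly where I would invoke (\ref{RBeq}) in the form $R(b)R(c) - \theta R(bc) = R(R(b)c + bR(c))$ to rewrite $R(b)R(c)$ and watch the asymmetric remainder collapse. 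Carefully bookkeeping the $\theta^0$, $\theta^1$, $\theta^2$ graded pieces separately is the cleanest way to organize this: the $\theta^2 ab c$ part is visibly symmetric, the $\theta^1$ part reduces to the weight-zero statement plus a symmetric correction, and the $\theta^0$ part is the classical Rota--Baxter $\Rightarrow$ pre-Lie computation.

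The main obstacle, such as it is, is purely organizational rather than conceptual: the expansion of $(a \triangleleft_\theta b) \triangleleft_\theta c$ produces on the order of a dozen monomials in $a,b,c,R(b),R(c)$, and one must pair each against its partner in $a \triangleleft_\theta (b \triangleleft_\theta c)$ and check that every term that fails to be manifestly $b \leftrightarrow c$ symmetric is killed by exactly one application of (\ref{RBeq}). I would streamline this by noting that $a \triangleleft_\theta b = a \cdot_\theta R(b) - R(b) \cdot a$ can be written with the associative product $a \cdot_\theta b := \theta ab$ absorbed, i.e.\ effectively treating $\bar R := R + \tfrac{\theta}{2}\id$ or simply working with the double Rota--Baxter structure; but the most transparent route for the paper is the brute graded expansion sketched above. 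I would then close by stating that $\triangleright_\theta$ is handled identically (using that $\tilde R = -\theta\,\id - R$ also satisfies (\ref{RBeq}), so $a \triangleright_\theta b = -(a \triangleleft_\theta^{\,\tilde R} b)$ up to reading the product in $A^{\mathrm{op}}$), which gives the left pre-Lie identity for $\triangleright_\theta$ with no further work.
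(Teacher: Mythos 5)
Your proposal is correct. The paper in fact states this lemma without proof (as a ``simple observation''), so there is nothing to compare against except the standard direct verification, which is exactly what you sketch; and you have correctly located the one place where the Rota--Baxter identity is actually needed. Concretely, writing $a\triangleleft_\theta b=[a,R(b)]+\theta ab$ and expanding, the only non-obviously symmetric ingredient of $a\triangleleft_\theta(b\triangleleft_\theta c)$ is $R(b\triangleleft_\theta c)=R(bR(c))-R(R(c)b)+\theta R(bc)$, and one application of (\ref{RBeq}) turns this into $R(b)R(c)-R\bigl(R(b)c+R(c)b\bigr)$, whose second summand is manifestly symmetric in $b,c$; the remaining terms of the associator $(a\triangleleft_\theta b)\triangleleft_\theta c-a\triangleleft_\theta(b\triangleleft_\theta c)$ pair off into $b\leftrightarrow c$--symmetric combinations using only associativity, exactly as you predict. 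For $\triangleright_\theta$, the clean route is the identity $a\triangleright_\theta b=-\,b\triangleleft_\theta a$ noted right after the lemma, together with the general fact that if $\triangleleft$ is right pre-Lie then $a\triangleright b:=-b\triangleleft a$ is left pre-Lie (the left associator in $(x,y,z)$ equals minus the right associator of $z$ against $(y,x)$). Your parenthetical alternative via $\tilde R=-\theta\,\id-R$ and $A^{\mathrm{op}}$ is not quite right as stated --- computing $a\triangleleft^{\tilde R}_\theta b$ in $A^{\mathrm{op}}$ returns $a\triangleleft_\theta b$ itself rather than $\triangleright_\theta$ --- but this is inessential since the flip argument already closes the case.
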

Note that $a \triangleleft_\theta b = - b \triangleright_\theta a$. For $\theta =0$, we obtain $a \triangleright_0 b = [R(a),b]=ad_{R(a)}(b)$. We denote the pre-Lie algebra corresponding to (\ref{lem:leftRBpre-Lie}) by $P_A$. Beside the pre-Lie products, one can define yet another product on Rota--Baxter algebras (the so-called double product).

\begin{lemma}
$A$ equipped with the ``double'' product:
\begin{equation}
\label{def:RBdouble}
        a \ast_\theta b := R(a)b + aR(b) + \theta ab
\end{equation}
is again a Rota--Baxter algebra of weight $\theta$ with Rota--Baxter map $R$. We denote it by $(A_\theta,R)$.
\end{lemma}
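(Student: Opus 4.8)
The plan is to verify directly that the double product $\ast_\theta$ is associative and that $R$ is a Rota--Baxter operator of weight $\theta$ for this new product. For associativity, I would expand both $(a\ast_\theta b)\ast_\theta c$ and $a\ast_\theta(b\ast_\theta c)$ using the definition $a\ast_\theta b = R(a)b + aR(b) + \theta ab$. The first expansion produces terms involving $R(R(a)b + aR(b) + \theta ab)c$ together with $(R(a)b+aR(b)+\theta ab)R(c)$ and $\theta(R(a)b+aR(b)+\theta ab)c$; the crucial move is to rewrite $R(R(a)b+aR(b)+\theta ab)$ as $R(a)R(b)$ using the Rota--Baxter relation \eqref{RBeq}. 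After this substitution the whole expression becomes a sum of monomials in $R(a),R(b),R(c),a,b,c$ weighted by powers of $\theta$, and the same manipulation on the right-associated product yields exactly the same list of monomials. So associativity of $\ast_\theta$ reduces to the Rota--Baxter identity plus associativity of the original product.

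Next I would check the Rota--Baxter relation for $R$ with respect to $\ast_\theta$, i.e.\ that $R(a)\ast_\theta R(b) = R\bigl(R(a)\ast_\theta b + a\ast_\theta R(b)\bigr) + \theta R(a\ast_\theta b)$. The left-hand side is $R(R(a))R(b) + R(a)R(R(b)) + \theta R(a)R(b)$. On the right-hand side, the arguments of the outer $R$ are $R(a)\ast_\theta b = R(R(a))b + R(a)R(b) + \theta R(a)b$, then $a\ast_\theta R(b) = R(a)R(b) + aR(R(b)) + \theta aR(b)$, and $a\ast_\theta b = R(a)b + aR(b) + \theta ab$. Summing inside the outer $R$ and then applying \eqref{RBeq} repeatedly to collapse each product of the form $R(\cdot)R(\cdot)$ — for instance $R\bigl(R(R(a))b\bigr)$ combines with other terms via the weight-$\theta$ relation — should reproduce the left-hand side. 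This is the same bookkeeping exercise as the associativity check, just with one fewer free variable.

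The main obstacle is purely organizational: keeping track of the roughly ten-to-fifteen monomials on each side and the powers of $\theta$ attached to them, since sign or coefficient slips are easy. A cleaner route that avoids brute force is to observe that $(A,R)$ Rota--Baxter of weight $\theta$ is equivalent to $R$ being a morphism onto its image with a compatible ``interior'' structure, and that $\ast_\theta$ is exactly the product transported back from $A_- \times A_+$-type decompositions; but for a self-contained proof the direct expansion is safest. I would also note in passing that the $\theta=0$ case $a\ast_0 b = R(a)b + aR(b)$ recovers the familiar ``integration by parts'' double product, which serves as a sanity check. Once associativity and the Rota--Baxter relation are both verified, the statement that $(A_\theta,R)$ is a Rota--Baxter algebra of weight $\theta$ follows immediately, completing the proof.
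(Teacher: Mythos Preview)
Your proposal is correct. The paper does not actually supply a proof of this lemma; it is stated as a well-known fact, with the single remark immediately afterwards that $R(a\ast_\theta b)=R(a)R(b)$.

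One comment on efficiency: your plan for verifying the Rota--Baxter relation is more laborious than necessary. Once you have the identity $R(a\ast_\theta b)=R(a)R(b)$ (which is literally the original Rota--Baxter relation~\eqref{RBeq} rewritten), the verification is one line:
\[
R\bigl(R(a)\ast_\theta b\bigr)+R\bigl(a\ast_\theta R(b)\bigr)+\theta R\bigl(a\ast_\theta b\bigr)
= R(R(a))R(b)+R(a)R(R(b))+\theta R(a)R(b)
= R(a)\ast_\theta R(b).
\]
There is no need to expand the inner $\ast_\theta$ products or to invoke~\eqref{RBeq} ``repeatedly'' as you suggest. The same identity also streamlines the associativity check, since it lets you replace $R(a\ast_\theta b)$ by $R(a)R(b)$ at the first step. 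So the whole argument is a two-line computation rather than a ten-to-fifteen monomial bookkeeping exercise.
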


It is easy to see that $R(a \ast_\theta b) = R(a)R(b)$. Both the pre-Lie products and the new associative product give rise to Lie algebras.  The following result is important in view of our forthcoming developments: it allows to relate the pre-Lie and the double product on a Rota--Baxter algebra to the pre-Lie and associative products in the theory of enveloping algebras of pre-Lie algebras.

\begin{lemma}
\label{lem:GL2RB}
The two Lie brackets associated respectively to the products $ \triangleright_\theta$ and $\ast_\theta$ coincide. In particular, since $GL$ is the enveloping algebra of the free pre-Lie algebra over $\mathcal D$, the embedding of $\mathcal D$ into $RB$ induces a map $\iota$ from $GL$ to $RB$. This map, which sends $(\curvearrowleft , \ast)$ to $(\triangleleft_\theta, \ast_\theta)$, is itself an embedding.
\end{lemma}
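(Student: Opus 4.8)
The plan is to verify Lemma~\ref{lem:GL2RB} in three stages: first the equality of the two Lie brackets, then the construction of the map $\iota$, and finally its injectivity.

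\textbf{Step 1: The Lie brackets agree.} I would compute directly the antisymmetrization of each product. For the double product, $a\ast_\theta b - b\ast_\theta a = R(a)b + aR(b) + \theta ab - R(b)a - bR(a) - \theta ba$. For the left pre-Lie product, $a\triangleright_\theta b - b\triangleright_\theta a = R(a)b - bR(a) - \theta ba - R(b)a + aR(b) + \theta ab$. Inspection shows these two expressions are literally identical, so the bracket $[a,b] = R(a)b - R(b)a + a R(b) - b R(a) + \theta(ab - ba)$ is common to both. (One can also observe $[a,b] = R(a)b - bR(a) - R(b)a + aR(b) + \theta[a,b]_{A}$, recovering the standard fact that the Lie algebra underlying a Rota--Baxter double product coincides with the one coming from the pre-Lie structure.) This step is routine.

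\textbf{Step 2: Construction of $\iota$.} By Lemma~\ref{lem:pre-LieRB}, $(A,\triangleright_\theta)$ is a left pre-Lie algebra; call its underlying Lie algebra $\mathfrak{g}$, and let $P_A \subseteq A$ denote this pre-Lie algebra (as in the text). The inclusion $\mathcal D \hookrightarrow RB$, composed with the pre-Lie structure on $RB$, gives a pre-Lie algebra morphism from the \emph{free} pre-Lie algebra on $\mathcal D$ — which is exactly $\mathcal T$ — into $P_{RB}$; this is the universal property invoked already in the statement. By Step~1 this is simultaneously a Lie algebra morphism $\mathcal T_{\mathrm{Lie}} \to \mathfrak{g}_{RB}$, where $\mathfrak g_{RB}$ is $RB$ under the common bracket, which by the second lemma is also the Lie algebra of the associative algebra $(RB_\theta, \ast_\theta)$. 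Now $GL = U(\mathcal T_{\mathrm{Lie}})$ (this is the content of Section~\ref{sect:UPL}), so by the universal property of the enveloping algebra the Lie morphism extends uniquely to an associative algebra morphism $\iota\colon GL \to (RB_\theta,\ast_\theta)$, i.e.\ $\iota(F\ast G) = \iota(F)\ast_\theta \iota(G)$. To see that $\iota$ also intertwines $\curvearrowleft$ with $\triangleleft_\theta$: the action of $GL$ on $\mathcal T$ by $\curvearrowleft$ is defined recursively by $t\curvearrowleft(F\ast G) = (t\curvearrowleft F)\curvearrowleft G$ (equation~(\ref{envact})), and the analogous identity holds for the pre-Lie action of the enveloping algebra of $P_{RB}$ on $P_{RB}$; since $\iota$ respects $\ast\mapsto\ast_\theta$ and agrees with the pre-Lie morphism on trees, an easy induction on the weight of the forest gives $\iota(F\curvearrowleft G) = \iota(F)\triangleleft_\theta \iota(G)$ for all $F\in GL$, $G$ a forest. (Here one uses $a\triangleleft_\theta b = -b\triangleright_\theta a$ to pass between the left and right pre-Lie conventions.)

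\textbf{Step 3: Injectivity.} This is where the real work lies. The map $\iota$ is a morphism of enveloping algebras induced by the pre-Lie morphism $\phi\colon \mathcal T \to P_{RB}$. An enveloping-algebra morphism $U(\phi)$ is injective as soon as $\phi$ is injective on the underlying Lie algebras (by the PBW theorem: $U(\phi)$ sends a PBW basis of $U(\mathcal T_{\mathrm{Lie}})$, built from any ordered basis of $\mathcal T$, to a linearly independent family in $U(\mathfrak g_{RB})$, hence in $RB$, provided $\phi$ is a linear injection — strictly one needs $\phi$ injective and the image to have a complementary subspace, which over a field is automatic). So it suffices to show that the canonical pre-Lie morphism from the free pre-Lie algebra $\mathcal T$ on $\mathcal D$ into $P_{RB}$, $P_{RB}$ being the pre-Lie algebra $(RB, \triangleright_\theta)$, is \emph{injective}. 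I expect this to be the main obstacle, and I would handle it by exhibiting an explicit left inverse or a grading/filtration argument: the free Rota--Baxter algebra $RB$ has an explicit combinatorial model (trees/forests with a compatible "Rota--Baxter grading", cf.\ the constructions cited in Section~\ref{sect:RBA}), and the sub-pre-Lie-algebra of $RB$ generated by $\mathcal D$ under $\triangleright_\theta$ should be identified, by a dimension or leading-term count in that model, with the free pre-Lie algebra on $\mathcal D$; in other words one needs that $\mathcal D$ generates a free pre-Lie algebra inside $RB$, with no hidden relations. Equivalently — and this is perhaps the cleanest route — one can note that $RB$ contains the free Rota--Baxter algebra which, after the classical identification of free Rota--Baxter algebras with decorated-rooted-tree (or planar-tree) objects, yields a copy of the free pre-Lie algebra on $\mathcal D$ as precisely the pre-Lie subalgebra generated by $\mathcal D$; then injectivity of $\phi$ is this identification, and injectivity of $\iota$ follows from PBW as above. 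I would flag that the only nontrivial input here is this freeness statement about the pre-Lie subalgebra of $RB$ generated by $\mathcal D$, and that everything else is formal.
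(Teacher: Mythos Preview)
Your Steps 1 and 2 are correct and more explicit than the paper, which skips the bracket computation entirely and does not spell out the universal-property construction of $\iota$. The PBW reduction in Step 3, reducing injectivity of $\iota$ to injectivity of the underlying pre-Lie morphism $\phi\colon\mathcal T\to P_{RB}$, is also sound.

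The divergence is in how the injectivity of $\phi$ is actually established. You correctly flag this as the only nontrivial input, but your proposed arguments (a leading-term count in a combinatorial model of $RB$, or an unspecified identification of the pre-Lie subalgebra generated by $\mathcal D$ with the free one) remain at the level of a plan rather than a proof; in particular, it is not obvious \emph{a priori} that the pre-Lie relations in $P_{RB}$ generated by the Rota--Baxter identity do not collapse anything in $\mathcal T$. The paper takes a specific and different route: it factors the map through the free dendriform algebra on $\mathcal D$. Concretely, it uses (i) the forgetful functor from Rota--Baxter to dendriform algebras, $x\uparrow y:=xR(y)+\theta xy$, $x\downarrow y:=R(x)y$, under which $\uparrow+\downarrow=\ast_\theta$ and $\downarrow-\uparrow^{\mathrm{op}}=\triangleright_\theta$; (ii) the fact, from the explicit constructions of free Rota--Baxter algebras \cite{AgMo,EG}, that the free dendriform algebra on $\mathcal D$ embeds in $RB$; and (iii) Chapoton's theorem \cite{chapo} (together with \cite{Foissy2}) that the free pre-Lie algebra on $\mathcal D$ embeds, via the free brace algebra, into the free dendriform algebra on $\mathcal D$. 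Chaining these gives the embedding $\mathcal T\hookrightarrow RB$ directly, without a separate PBW argument. Your reduction is cleaner conceptually, but to complete it you would still need to supply something equivalent to step (iii), and the dendriform/brace detour is the standard way to do that.
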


\begin{proof}
The lemma is a consequence of the construction of free Rota--Baxter algebras \cite{AgMo,EG} and of Chapoton's works on dendriform enveloping algebras of brace algebras \cite{chapo}. We review briefly the chain of arguments allowing to deduce the Lemma from these works.

Recall first that a noncommutative shuffle algebra, or dendriform algebra, is an algebra equipped with two products, $\downarrow$ and $\uparrow$, satisfying the noncommutative version of the Eilenberg--MacLane--Sch\"utzenberger axioms of shuffle algebras (see e.g., \cite{EP}), that is:
\begin{eqnarray}
\label{demishuffleNC}
	(a\uparrow b)\uparrow c		&=&a\uparrow (b\uparrow c+b\downarrow c)\\
	a\downarrow (b\uparrow c)	&=&(a\downarrow b)\uparrow c\\
	a\downarrow (b\downarrow c)	&=&(a\uparrow b+a\downarrow b)\downarrow c.
\end{eqnarray}
These axioms are satisfied for example by the decomposition of the shuffle product acting of simplicial chain complexes in algebraic topology into two half-shuffles (they actually underly the celebrated Eilenberg--MacLane proof of the associativity of shuffle products in topology, \cite{EM1,EM2}). Another fundamental example is the Malvenuto--Reutenauer algebra, or algebra of Free Quasi-Symmetric Functions. Its dendriform structure has been studied intensively by Foissy, see e.g.~\cite{Foissy}.

There is a forgetful functor from Rota--Baxter algebras to dendriform algebras defined in terms of products by: 
$$
	x\uparrow y:= xR(y)+\theta xy,\ x\downarrow y:=R(x)y,
$$
such that $\uparrow + \downarrow=\ast_\theta$. The construction of free Rota--Baxter algebras shows that the embedding of $\mathcal D$ into $RB$  induces an embedding of the free dendriform algebra over $\mathcal D$ into $RB$, see \cite{AgMo}. The Lemma follows therefore if the free pre-Lie algebra and its enveloping algebra embed into the free dendriform algebra (over the same sets of generators). This last result is a consequence of \cite{chapo}: the free pre-Lie algebra over $\mathcal D$ embeds (as a subobject of the free brace algebra over $\mathcal D$ -- the fact that there exists a canonical map from the free pre-Lie algebra to the free brace algebra is also a direct consequence of Chapoton's work, see also \cite{Foissy2}) into the free dendriform algebra. 
\end{proof}

At this point we would like to introduce some notation. We define $R^{[n}(b_1,\ldots ,b_n)$ inductively by $R^{[1}(b_1):=b_1$, and
 \begin{equation}
 \label{notationR}
	R^{[n}(b_1,\ldots ,b_n):= R(R^{[n-1}(b_1,\ldots ,b_{n-1})) b_n.
 \end{equation}
We denote $R^{[n}(b,\ldots ,b)$ by $R^{[n}(b)$. The element $R^{[n]}(b_1,\ldots ,b_n)$ is set to be $R(R^{[n}(b_1,\ldots ,b_n))$, and $R^{[n]}(b,\ldots ,b)$ is just denoted $R^{[n]}(b)$. 

\begin{remark}{\rm{1) In \cite{EGBP,EMP} the notions of Spitzer algebra $\mathcal{S}$ and double Spitzer algebra $\mathcal C$ were introduced. The former is a graded connected cocommutative Hopf algebra freely generated by the elements $R^{[n]}(x)$, $n>0$. The latter is freely generated with respect to the product (\ref{def:RBdouble}) by the elements $R^{[n}(x)$, $n>0$, and is isomorphic as a Hopf algebra to $\mathcal S$.

2) By requiring the free generators to form a sequence of divided powers, $\mathcal{S}$ can be shown to be naturally isomorphic to the Hopf algebra of noncommutative symmetric functions \cite{gelfand} or, equivalently, to the descent algebra. The embedding in Lemma \ref{lem:GL2RB}, which is an embedding of enveloping algebras, is compatible with the Hopf algebra structures (enveloping algebras are Hopf algebras), in particular with the one on the Spitzer algebra.

3) An important property of the double Spitzer algebra $\mathcal C$ relates to the Dynkin operator. Recall that the space of linear endomorphisms of a graded connected Hopf algebra $H$ is equipped with the structure of an associative algebra by the convolution product. In that framework, the Dynkin operator $D$ is defined as the convolution product of the antipode $S$ of the Hopf algebra (i.e.~the convolution inverse of the identity $id$ of $H$) with the graduation operator $Y$ (acting as the multiplication by $n$ on the degree $n$ component of $H$). When acting on the double Spitzer algebra $\mathcal C$ associated to the free Rota--Baxter algebra over a generator $b$, the Dynkin operator satisfies the identity
$$
	D(R^{[n}(b))=D(R(\cdots R(R(b)b)\cdots )b)= b\rhd_\theta (b\rhd_\theta (\cdots  (b\rhd_\theta b)\cdots )).
$$
We refer to \cite{EGBP,EMP}, where this identity was proven, for general aspects including further references on Hopf algebras and Dynkin operators.}}
\end{remark}  

The following statement is of crucial importance for the understanding of the combinatorics of time-ordered products (the link between time-ordered products and Rota--Baxter algebras will be detailed in the next section). We state it therefore as a theorem.

\begin{theorem} \label{thm:keyth}
The image of a symmetric product $b_1 \cdots b_n$ in $GL$, $b_i \in \mathcal D$, by the previous canonical embedding $\iota$ is the following element, which we call the {\it{pre-time-ordered product}} of the $b_i$ in $RB$:
 \begin{equation}
 \label{keyeq}
 	\iota (b_1\cdots b_n) = \sum\limits_{\sigma \in S_n} R^{[\sigma}(b_1,\ldots ,b_n),
 \end{equation}
where: 
$$
	R^{[\sigma}(b_1,\ldots ,b_n) := R^{[n}(b_{\sigma(1)},\ldots , b_{\sigma(n)}) 
						     = R\bigl(\cdots R( R(b_{\sigma(1)})b_{\sigma(2)})\cdots \bigr)b_{\sigma(n)} .
$$
Observe that $R^{[\id}(b,\ldots ,b) = R^{[n}(b)$. 
\end{theorem}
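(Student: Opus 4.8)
The strategy is to compute $\iota(b_1 \cdots b_n)$ by combining Proposition~\ref{prop:Envel} (which expands the symmetric product $b_1 \cdots b_n$ into iterated $\ast$-products of the pre-Lie ``tree-like'' elements $b_{P_i}$) with a direct analysis of how $\iota$ translates each piece $\iota(b_{P_1}) \ast_\theta \cdots \ast_\theta \iota(b_{P_k})$ into nested $R$'s. Since $\iota$ is an algebra morphism sending $(\curvearrowleft,\ast)$ to $(\triangleleft_\theta,\ast_\theta)$, and since $R(a \ast_\theta b) = R(a)R(b)$ with $x \downarrow y = R(x)y$, the iterated $\ast_\theta$-product of single generators will naturally produce the ``staircase'' operators $R^{[n}$. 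The heart of the matter is therefore to show that the alternating sum over the restricted partitions $P$ in Proposition~\ref{prop:Envel}, once pushed through $\iota$, collapses to the clean symmetric sum $\sum_{\sigma \in S_n} R^{[\sigma}(b_1,\ldots,b_n)$.

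\textbf{Step 1: reduce to a statement about the brace / pre-Lie expansion.} First I would record that, because $\iota$ is an embedding of enveloping algebras, it suffices to prove the identity after applying $\iota$ to the right-hand side of Proposition~\ref{prop:Envel}. Thus the goal becomes
$$
\sum_{P_1,\ldots,P_k} (-1)^{n-k}\, \iota(b_{P_1}) \ast_\theta \cdots \ast_\theta \iota(b_{P_k}) = \sum_{\sigma \in S_n} R^{[\sigma}(b_1,\ldots,b_n).
$$
I would first treat the case $k=1$, i.e.\ understand $\iota(b_{P})$ for a single block $P = \{p_1,\ldots,p_h\}$: by definition $b_P = \sum_{\sigma \in S_{h-1}} b_{p_{\sigma(1)}} \curvearrowleft (b_{p_{\sigma(2)}} \curvearrowleft (\cdots \curvearrowleft (b_{p_{\sigma(h-1)}} \curvearrowleft b_{p_h})\cdots))$, so $\iota(b_P)$ is the same nested expression with $\curvearrowleft$ replaced by $\triangleleft_\theta$ (and the $b_i$ understood in $RB$). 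Here I expect the key computational lemma: the nested $\triangleleft_\theta$-product of generators, summed over $S_{h-1}$ with the last slot fixed, equals an alternating sum of staircase operators $R^{[j}(\ldots)$. This is essentially the inverse of the ``Dynkin-type'' identity recalled in the third remark, $D(R^{[h}(b)) = b \triangleright_\theta(\cdots)$, read in the multi-variable form; in fact the noncommutative Bohnenblust--Spitzer package of \cite{EMP} contains precisely such conversions, and since the statement is universal it may be cleanest to prove it directly by induction on $h$ using the Rota--Baxter relation \eqref{RBeq} together with $a \triangleleft_\theta b = aR(b) - R(b)a + \theta ab$.

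\textbf{Step 2: assemble the blocks and cancel.} Having an expansion of each $\iota(b_{P_i})$ as a signed sum of staircases, I would multiply them with $\ast_\theta$ and use $R(a \ast_\theta b) = R(a)R(b)$ repeatedly to flatten an iterated $\ast_\theta$-product of staircases into a single staircase $R^{[n}$ of the concatenated arguments (this is the same bookkeeping that makes $\mathcal{C}$ free on the $R^{[n}(x)$ in the remark). The result is that the left-hand side becomes a large signed sum $\sum_\sigma c_\sigma R^{[n}(b_{\sigma(1)},\ldots,b_{\sigma(n)})$ over \emph{all} $\sigma \in S_n$, where $c_\sigma$ is an integer coming from (a) the sign $(-1)^{n-k}$, (b) the signs inside each block expansion, and (c) the combinatorics of which $\sigma$ arise from which partition $P$. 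The final step is a purely combinatorial identity: $c_\sigma = 1$ for every $\sigma$. I would prove $c_\sigma = 1$ by a sign-reversing involution on the pairs (restricted partition $P$, choices of block-expansion terms, choices in the $\ast_\theta$-flattening) that produce a fixed $\sigma$ — the constraint $\sup(P_i) > j$ for all $j \in P_k$, $k<i$ in Proposition~\ref{prop:Envel} is exactly the shape of condition one meets in such telescoping/inclusion-exclusion arguments (it is the same ``record/descent'' statistic that governs PBW-type triangularity), so the cancellation should be a clean matching argument pairing a partition with its refinement at the first admissible merge point.

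\textbf{Main obstacle.} The genuinely delicate point is Step 2's combinatorial identity $c_\sigma = 1$: one must check that the three independent sources of signs and choices conspire so that, for each permutation $\sigma$, exactly one surviving term remains. Getting the involution to respect the asymmetric partition condition $\sup(P_i)>j\ \forall j\in P_k$ (which is not invariant under relabelling) is where care is needed; an alternative, possibly cleaner route is to avoid the explicit sign count altogether and instead prove the theorem by induction on $n$ directly in $RB$, mirroring the inductive proof of Theorem~\ref{FundThm}: write $\iota(b_1\cdots b_n) = \iota((b_1\cdots b_{n-1})\circ b_n)$, express $\circ$ via $\ast_\theta$ and $\triangleleft_\theta$ exactly as in that proof, and then use $R^{[n}(\ldots) = R(R^{[n-1}(\ldots))\,b_n$ together with the Rota--Baxter relation to see that inserting $b_n$ into each of the $n$ positions of a staircase of length $n-1$, plus the ``$b_n$ grafted onto $b_i$'' correction terms, reproduces precisely $\sum_{\sigma\in S_n} R^{[\sigma}$. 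This inductive route sidesteps the global sign bookkeeping and is the one I would attempt first.
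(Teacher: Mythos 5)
Your proposal takes a genuinely different route from the paper, and in both of its variants the decisive step is left unproved. In your primary route (Steps 1--2), applying $\iota$ to Proposition~\ref{prop:Envel} turns the theorem into exactly the noncommutative Bohnenblust--Spitzer identity of Corollary~\ref{cor:Bsp}; so the claim ``$c_\sigma=1$'' \emph{is} the theorem, and asserting that ``the cancellation should be a clean matching argument'' defers the entire content to a sign-reversing involution you do not construct. The asymmetric partition condition $\sup(P_i)>j$ for $j\in P_k$, $k<i$, together with the signs hidden inside each block expansion $\iota(b_{P_i})$ (via $a\triangleleft_\theta b=-b\triangleright_\theta a$) and the flattening of $\ast_\theta$-products of staircases, makes this bookkeeping the hard part of the whole story; as written this is a genuine gap. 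Your alternative route --- a direct induction in $RB$ mirroring the proof of Theorem~\ref{FundThm} --- is sound in outline and is essentially the strategy of \cite{EMP}, but it too needs two things you only gesture at: (a) the induction hypothesis must be strengthened to arbitrary elements of the pre-Lie subalgebra (not just generators $b_i\in\mathcal D$), because the correction terms involve $b_i\triangleleft_\theta b_n$, which is not a generator; equivalently one should prove the identity as a universal Rota--Baxter identity in $n$ arbitrary arguments; and (b) one must actually verify the ``insertion lemma'', namely that
$R^{[m}(c_1,\ldots,c_m)R(a)+\theta R^{[m}(c_1,\ldots,c_m)a-\sum_i R^{[m}(c_1,\ldots,c_i\triangleleft_\theta a,\ldots,c_m)$
equals the sum of the staircases obtained by inserting $a$ into the first $m$ positions, which requires a further induction on $m$ using the Rota--Baxter relation~\eqref{RBeq}. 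Both points are fixable, so I would call this route viable but incomplete.

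The paper avoids all of this multi-variable bookkeeping by polarization: setting $b=\lambda_1b_1+\cdots+\lambda_nb_n$ reduces the theorem to the single identity $\iota(b^n)=n!\,R^{[n}(b)$. A one-variable induction in $GL$ (iterating $b^n=b^{n-1}\ast b-(n-1)b^{n-2}(b\curvearrowleft b)$) then yields $\iota(b^n)/(n-1)!=\sum_{k}R^{[(n-k)}(b,\ldots,b)\ast_\theta b^{\rhd_\theta k}$, and the right-hand side is recognized as $(\id\star D)(R^{[n}(b))=Y(R^{[n}(b))=nR^{[n}(b)$ in the double Spitzer Hopf algebra, using the Dynkin identity $D(R^{[n}(b))=b^{\rhd_\theta n}$ recalled in the remark. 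So the paper's key input is Hopf-algebraic (the convolution identity $\id\star(S\star Y)=Y$) rather than combinatorial; your approach, if completed, buys an elementary self-contained proof at the cost of the insertion lemma and the strengthened induction, while the paper's buys brevity at the cost of importing the Dynkin-operator machinery of \cite{EGBP,EMP}. If you pursue your inductive route, I would recommend first borrowing the paper's polarization step, which lets you run the induction on powers of a single $b$ and removes the need to track which arguments are generators.
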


\begin{proof}
The reason for calling these elements (pre-)time-ordered products follows from the fact that its image under the Rota--Baxter map $R$, e.g., when $R=I$ is the Riemann integral operator, is the usual time-ordered product. Further below we will dwell on this. 

Notice first that polarization applies. Defining $b:=\lambda_1 b_1 + \cdots + \lambda_n b_n$, then $\iota(b_1 \cdots b_n)$ is the coefficient of $\lambda_1 \cdots \lambda_n$ in the expansion of $\iota(b^n)$ as a polynomial in the $\lambda_i$. A similar observation holds for the right hand side. Hence, we are left with showing that: 
\begin{equation}
\label{recurs}
	\iota (b^n) = n! R^{[\id}(b,\ldots ,b)
\end{equation}
for an arbitrary element $b$ in the linear span of $\mathcal D$.

The proof is then by induction on $n$. For $n=2$ we find:
\begin{eqnarray*}
	\iota(b^2) = \iota(b*b - b \curvearrowleft b) 	&=& b *_\theta b - b \lhd_\theta b\\ 
									&=& b *_\theta b + b \rhd_\theta b =2R(b)b. 
\end{eqnarray*}

Assume that identity (\ref{recurs}) is true up to order $n-1$. Then:
\begin{eqnarray*}
	\iota(b^n) 	&=& \iota\big(b^{n-1} \ast b  - (n-1)b^{n-2} (b \curvearrowleft b)\big)\\
			&=& \iota\big(b^{n-1} \ast b - (n-1)b^{n-2} \ast (b \curvearrowleft b) 
			+  (n-1)(n-2)b^{n-3} (b \curvearrowleft (b \curvearrowleft b))\big)\\
			&=& \iota\big( \sum_{k=1}^{n-1} (-1)^{k-1} \frac{(n-1)!}{(n-k)!} b^{n-k} \ast b^{\curvearrowleft k}\big),
\end{eqnarray*}
where $b^{\curvearrowleft 1}:=b$, and $b^{\curvearrowleft k}:= (b \curvearrowleft b^{\curvearrowleft (k-1)})$. Hence, with $\iota(b \curvearrowleft b^{\curvearrowleft (k-1)})) = - \iota(b^{\curvearrowleft (k-1)}) \rhd_\theta b$, this yields:
 \begin{equation}
 \label{sol}
	\frac{\iota(b^n)}{(n-1)!} =  \sum_{k=1}^{n-1} R^{[(n-k)}(b,\ldots ,b) \ast_\theta b^{\rhd_\theta k},
 \end{equation}
where $b^{\rhd_\theta k} := (b^{\rhd_\theta (k-1)} \rhd_\theta b)$. The right hand side of (\ref{sol}) is exactly the convolution product $\id \star D = \id \star (S \star Y)=Y$ applied to $R^{[n}(b)$ in the context of the double Spitzer (Hopf) algebra $\mathcal{C}$ \cite{EMP}. Therefore we obtain $\iota(b^n) = (n-1)!nR^{[n}(b)$. 
\end{proof}

We obtain for $\iota(b^n)$ the expansion:
 \begin{equation}
 \label{keyeq2}
	n!R^{[n}(b) = \sum\limits_{{s_1+\cdots + s_k=n}\atop {s_i>0}} c(s_1,\cdots , s_k)\
				b^{\rhd_\theta s_1} \ast_\theta  \cdots \ast_\theta  b^{\rhd_\theta s_k},  
\end{equation}
where the numbers $c(s_1,\cdots , s_k)$ arise from choosing $l_1=\cdots = l_n=b$ in Proposition \ref{prop:Envel}. Since the elements $b^{\rhd_\theta l}$, $l > 0$, are algebraically independent in the double Spitzer algebra, we can deduce from the results in \cite{EMP} on the Dynkin map and its inverse that:
 \begin{equation}
 \label{keyeq3}
 	c(s_1,\ldots , s_k) =  \frac{n!}{(\Pi_{j=1}^k (s_1+\cdots + s_k))} 
\end{equation}
  
Eventually, after polarization of (\ref{recurs}) we obtain the noncommutative generalization of the well-known Bohnenblust--Spitzer identity \cite{EMP}.
\begin{corollary}
\label{cor:Bsp}
 \begin{equation}
 \label{BSp1}
	\sum_{\sigma \in S_n} R^{[\sigma}(b_1,\ldots, b_n) 
	= \sum\limits_{P_1,\ldots ,P_k} b_{P_1}\ast \cdots \ast b_{P_k},  
 \end{equation}
where $P:=\{P_1,\ldots ,P_k\}$ runs over the partitions of $[n]=\{1,\ldots, n\}$ ($P_1\coprod \cdots \coprod P_k=[n]$), such that $\sup(P_i)>j$  $\forall j \in P_k$, $k<i$. Besides, for $P_i=\{p_1^i,\ldots ,p_h^i\}$, where the $p_j^i$ are in the natural order, we set:
 \begin{equation}
 \label{BSpPart}
	b_{P_i}:=\sum_{\sigma\in S_{h-1}}
	( \cdots (b_{p_{h}^i} \rhd_\theta b_{p_{\sigma(h-1)}^i})\cdots \rhd_\theta b_{p_{\sigma(2)}^i} )\rhd_\theta b_{p_{\sigma(1)}^i}  .
 \end{equation}
\end{corollary}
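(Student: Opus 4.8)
The plan is to obtain Corollary~\ref{cor:Bsp} as a routine consequence of Theorem~\ref{thm:keyth} and Proposition~\ref{prop:Envel}, essentially by transporting the universal identity in the enveloping algebra of a free pre-Lie algebra through the embedding $\iota$. First I would recall that by Lemma~\ref{lem:GL2RB} the map $\iota\colon GL\to RB$ is a morphism of associative algebras sending $\ast$ to $\ast_\theta$ and $\curvearrowleft$ to $\triangleleft_\theta$; consequently, applying $\iota$ to both sides of the identity of Proposition~\ref{prop:Envel} (equivalently, of Theorem~\ref{FundThm}) for $l_i=b_i\in\mathcal D$ gives
\begin{equation*}
\iota(b_1\cdots b_n) = \sum_{P_1,\ldots,P_k}(-1)^{n-k}\,\iota(b_{P_1})\ast_\theta\cdots\ast_\theta\iota(b_{P_k}),
\end{equation*}
with the $P_i$ ranging over exactly the partitions described in Theorem~\ref{FundThm}. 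On the left-hand side, Theorem~\ref{thm:keyth} identifies $\iota(b_1\cdots b_n)$ with $\sum_{\sigma\in S_n}R^{[\sigma}(b_1,\ldots,b_n)$, which is already the left-hand side of \eqref{BSp1}.

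The remaining task is to match the right-hand sides, i.e.\ to compute $\iota(b_{P_i})$. Here I would use the explicit form of $b_{P_i}=t_{P_i}$ from Theorem~\ref{FundThm}, a signed-free sum of left-combed iterated $\curvearrowleft$-products of the $b_{p_j^i}$, and the fact that $\iota$ is linear and multiplicative for $\curvearrowleft\mapsto\triangleleft_\theta$. Using the identity $a\triangleleft_\theta b=-\,b\triangleright_\theta a$ noted right after Lemma~\ref{lem:pre-LieRB}, each left-combed $\curvearrowleft$-monomial
\begin{equation*}
b_{p_{\sigma(1)}^i}\curvearrowleft(b_{p_{\sigma(2)}^i}\curvearrowleft(\cdots\curvearrowleft(b_{p_{\sigma(h-1)}^i}\curvearrowleft b_{p_h^i})\cdots))
\end{equation*}
is sent by $\iota$ to a right-combed $\triangleright_\theta$-monomial; one has to check that the nesting reverses cleanly, so that the innermost factor $b_{p_h^i}$ becomes the leftmost entry and the whole expression becomes
$(\cdots(b_{p_h^i}\triangleright_\theta b_{p_{\sigma(h-1)}^i})\cdots\triangleright_\theta b_{p_{\sigma(2)}^i})\triangleright_\theta b_{p_{\sigma(1)}^i}$,
exactly the summand in \eqref{BSpPart}. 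Because $\iota$ is an \emph{embedding}, no collapsing or cancellation intervenes: the images are literally these elements, and the signs $(-1)^{n-k}$ on the right must therefore disappear. The cleanest way to see that is to invoke the already-proven scalar case: for $b_1=\cdots=b_n=b$ the right-hand side of Proposition~\ref{prop:Envel} becomes, after applying $\iota$, the expansion \eqref{keyeq2} with the manifestly \emph{positive} coefficients \eqref{keyeq3}, while the left-hand side is $n!\,R^{[n}(b)$; polarizing this (reading off the coefficient of $\lambda_1\cdots\lambda_n$ in $\iota((\sum\lambda_i b_i)^n)$, exactly as in the proof of Theorem~\ref{thm:keyth}) turns the signed sum over partitions into the unsigned Bohnenblust--Spitzer sum \eqref{BSp1}. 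So in fact the polarization argument simultaneously yields the statement and explains the sign change.

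The step I expect to be the main obstacle is the bookkeeping in this sign-and-polarization passage: one must be careful that when the $n$ labels $1,\ldots,n$ are decorated by distinct symbols $b_1,\ldots,b_n$ the partition statistics $\sup(P_i)>j$ for all $j\in P_k$, $k<i$ survives polarization unchanged (it is a condition on the index set $[n]$, not on the decorations, so it does), and that the $S_{h-1}$-sums in $b_{P_i}$ account precisely for the iterated-$\curvearrowleft$ monomials appearing after applying $\iota$ and reindexing. Since $\iota$ is injective and is an algebra map for both the associative and the pre-Lie products, once the right-combed form of $\iota(b_{P_i})$ is verified and the scalar case is in hand, the corollary follows with no further computation; this is why I would present it as a short deduction rather than an independent argument.
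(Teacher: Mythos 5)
Your overall route is the paper's: apply $\iota$ to Proposition~\ref{prop:Envel} (equivalently Theorem~\ref{FundThm}) with $l_i=b_i$, identify the left-hand side with $\sum_{\sigma}R^{[\sigma}(b_1,\ldots,b_n)$ via Theorem~\ref{thm:keyth}, and convert the blocks $b_{P_i}$ from $\triangleleft_\theta$ to $\rhd_\theta$. That is exactly how the corollary is obtained in the text, so the strategy is sound.

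The one step that actually needs checking is the disappearance of the sign $(-1)^{n-k}$, and this is where your write-up goes wrong before patching itself with a circular appeal. It is not true that each left-combed $\curvearrowleft$-monomial is sent by $\iota$ ``literally'' to the corresponding right-combed $\rhd_\theta$-monomial: since $a\triangleleft_\theta b=-\,b\rhd_\theta a$, each of the $h-1$ nested conversions in a block of size $h$ contributes a factor $-1$ (e.g.\ $a\triangleleft_\theta(b\triangleleft_\theta c)=(c\rhd_\theta b)\rhd_\theta a$ carries $(-1)^2$, while $a\triangleleft_\theta b=-(b\rhd_\theta a)$ carries $(-1)^1$), so in fact $\iota(t_{P_i})=(-1)^{|P_i|-1}\,b_{P_i}$ with $b_{P_i}$ as in \eqref{BSpPart}. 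The product over the blocks of a partition into $k$ parts is $(-1)^{\sum_i(|P_i|-1)}=(-1)^{n-k}$, which cancels the $(-1)^{n-k}$ of Proposition~\ref{prop:Envel} on the nose; that two-line count is the whole content of the sign change, and your assertion that the signs ``must therefore disappear'' because $\iota$ is an embedding does not establish it. The fallback through the scalar case is not a valid substitute: the positivity of the coefficients \eqref{keyeq3} in \eqref{keyeq2} already presupposes the sign cancellation (those coefficients arise from specializing the signed sum of Proposition~\ref{prop:Envel}), and recovering the partition-indexed multilinear right-hand side of \eqref{BSp1} by polarizing the scalar identity requires knowing which multilinear expression it specializes from --- which is again Proposition~\ref{prop:Envel} plus the sign bookkeeping. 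With the direct count above inserted, your deduction is complete; your remaining remarks (that the partition statistics lives on the index set $[n]$ and is unaffected by polarization, and that the $S_{h-1}$-sums match term by term) are correct.
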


\noindent See also \cite{NovThib}, where the identity was proved using techniques of (colored) free quasi-symmetric functions.   

Looking at  Corollary \ref{cor:Bsp}, it seems to be a natural question to ask whether one can rewrite the right hand side of (\ref{BSp1}) as a sum over permutations in $S_n$ instead of partitions. Indeed, in \cite{EMP} this was done using the notion of left-to-right maxima, or records, of permutations. 

Recall that an element $\sigma(i)$ in a permutation $\sigma=(\sigma(1),\ldots,\sigma(n)) \in S_n$ is a record if it is strictly larger than all elements to its left. The position $i$ is the position of the record. In a canonical cycle decomposition of a permutation, we put the maximal element in each cycle first.  We denote by $m_j$ the number of elements in cycle $c_j$. The cycles are then ordered from left to right in increasing order of the cycle maxima, i.e., the leftmost element in each cycle. We enumerate the elements inside each cycle from $0$ to $m_j-1$, $c_j=({j_0},{j_1},\ldots,{j_{m_j-1}})$, where ${j_0}> j_l$, ${1} \le l \le {m_j-1}$. Recall the bijection $q: S_n \to S_n$, which associates with each permutation a unique canonical cycle decomposition, $q(\sigma) = c_1 \cdots c_k$. The cycles are obtained by introducing parentheses at each record position. We denote by $|q(\sigma)|=k$ the number of cycles. For example, the permutation $\sigma=(32541687) \in S_8$ is mapped to the canonical cycle decomposition $q(\sigma)=(32)(541)(6)(87)$. 

With this notation we obtain from (\ref{BSp1}) the identity:
\begin{equation}
\label{ncBSp}
	\sum_{\sigma \in S_n} R\bigl(\cdots R( R(b_{\sigma(1)})b_{\sigma(2)})\cdots \bigr)b_{\sigma(n)} 
	=	\sum_{\sigma \in S_n}  \mathrm{lr}^{*_{\theta}}_{\sigma}(b_1,\ldots,b_n).
\end{equation}
The map $\mathrm{lr}^{*_{\theta}}_{\sigma}$ is defined using $q(\sigma)=c_1 \cdots c_k$ by:
$$
	\mathrm{lr}^{*_{\theta}}_{\sigma}(b_1,\ldots,b_n) 
	:= \sideset{}{^{*_{\theta}}}\prod_{j=1}^{k}\Big( \prod_{i=1}^{m_j-1}\mathrm{r}_{\rhd_\theta b_{j_i}}(b_{j_0})\Big)
	= \sideset{}{^{*_{\theta}}}\prod_{j=1}^{k}\Big( (b_{j_0}\rhd_\theta  b_{j_1}) \cdots \rhd_\theta b_{j_{m_j-1}}\Big),
$$   
with the right multiplication operator $\mathrm{r}_{\rhd_\theta x}(y):=y \rhd_\theta x $. As an example consider $\sigma:=(43512) \in S_5$, with $q(43512)=(43)(512)$, we find:
$$
	\mathrm{lr}^{*_{\theta}}_{\sigma}(b_1,\ldots,b_5) 
	= (b_4 \rhd_\theta b_3)*_{\theta}((b_5 \rhd_\theta b_1) \rhd_\theta b_2).
$$  
Note that the sum (\ref{BSpPart}) corresponds to the $(m_i-1)!$ ordered cycles $(i_0,i_{\sigma(1)},\ldots, i_{\sigma(m_i-1)})$ where ${i_0}> i_l$, ${1} \le l \le {m_i-1}$. 

Observe that in the case of a commutative Rota--Baxter algebra we find that $\mathrm{r}_{\rhd_\theta x}(y)= \theta xy = \theta yx$, and that (\ref{ncBSp}) reduces to the classical expression:
$$
	\sum_{\sigma \in S_n} R\bigl(\cdots R( R(b_{\sigma(1)})b_{\sigma(2)})\cdots \bigr)b_{\sigma(n)} 
			= \sum_{P_1, \ldots,P_k} \theta^{n -|k|}
				\sideset{}{^{*_{\theta}}}\prod_{i =1}^k(|P_i|-1)!  \prod_{j \in P_i}b_j,
$$   
where $P:=\{P_1, \ldots,P_k\}$ runs over set partitions of $[n]=\{1,\ldots, n\}$, i.e., $P_1\coprod \cdots \coprod P_k=[n]$. The number of elements in $P_i$ is denoted $|P_i|$. The link to identity (\ref{ncBSp}) follows from the fact that $(|P_i|-1)!$ distinct cycles $(i_{0},i_{\sigma(1)},\ldots,i_{\sigma({|P_i|-1})})$, $\sigma \in S_{m_j-1}$, are mapped to the same block $P_i$.

%%%%%%%%%%%%%%%%%%%%%%%%%%%%%%%%%%%%%%%%%

\section{Time-ordered exponential and Bohnenblust--Spitzer identitiy}
\label{sect:ToBSp}

Now, we explore time-ordered products from the point of view of Rota--Baxter algebra. Note that \cite{EMP} was in parts motivated by Lam's paper \cite{lam1998} on a decomposition of time-ordered products and path-ordered exponentials.

Following \cite{lam1998} the notion of time-ordering, represented in terms of the so-called time-ordering operator $\mathsf{T}$, plays a central role in applied mathematics as well as theoretical physics. Recall that it is defined for functions $U_1,\ldots, U_n$ at distinct times $s_1,\ldots, s_n$ as follows:
\begin{equation}
\label{TOrd}
	{\mathsf{T}}\big[U_1(s_1) \cdots U_n(s_n)\big] := 
	\sum_{\sigma \in S_n} \Theta^\sigma(s_1, \ldots s_n)\ U_{\sigma(1)}(s_{\sigma(1)}) \cdots U_{\sigma(n)}(s_{\sigma(n)}),
\end{equation}
where $ \Theta^\sigma:=\Theta^\sigma(s_1, \ldots , s_n)$ involves the Heaviside step function $\Theta^\sigma(s_1, \ldots, s_n):= \Pi_{i=1}^{n-1} \Theta(s_{\sigma(i)} - s_{\sigma(i+1)}).$ The functions $U_i$ may be matrix or operator valued. Observe that (\ref{TOrd}) implies that for any permutation $\sigma \in S_n$: 
$$
	{\mathsf{T}}\big[U_1(s_1) \cdots U_n(s_n)\big]={\mathsf{T}}\big[U_{\sigma(1)}(s_{\sigma(1)}) \cdots U_{\sigma(n)}(s_{\sigma(n)})\big].
$$ 
The benefit of the notion of time-ordering is best described in the context of linear initial value problems (IVP), which consist of a first order linear differential equation
\begin{equation}
\label{eq:ivp}
	\dot{Y}(t)= U(t)Y(t),
\end{equation}
together with the initial value $Y(0) = Y_0$. Corresponding to (\ref{eq:ivp}) we find the linear integral fixpoint equation:
\begin{equation}
\label{eq:fixpoint} 
	Y(t)=Y_0 + \int_0^tU(s)Y(s)ds,
\end{equation}
from which the formal solution of (\ref{eq:ivp}) is derived in terms of the time-ordered exponential function:
\begin{equation}
\label{eq:Torder} 
	{\mathsf{T}}\!\exp\Bigl(\int_0^t U(s)ds\Bigr)Y_0 := Y_0\un + 
			\sum_{n>0} \frac{1}{n!} \int_{[0,t]^n} {\mathsf{T}}\big[U(t_1) \cdots U(t_n)\big]dt_1 \cdots dt_n Y_0.
\end{equation}
The first few terms are:
\begin{equation}
\label{eq:TorderExplicit} 
	 Y(t)=  \Big(\un + \int_0^t \!\!\!\!U(t_1) dt_1 \
	   +  \int_0^t \!\!\!\!U(t_1)\!\!\int_0^{t_1} \!\!\!\! U(t_2) dt_2dt_1\
	   +  \int_0^t \!\!\!\!U(t_1)\!\!\int_0^{t_1} \!\!\!\! U(t_2) \!\! \int_0^{t_2} \!\!\!\!U(t_3) dt_3dt_2dt_1\ + \cdots\Big)Y_0 .
\end{equation}
This follows directly from the obvious identity:
\begin{eqnarray*}
	\int_{[0,t]^n} {\mathsf{T}}\big[U(t_1) \cdots U(t_n)\big] dt_1 \cdots dt_n &=& 
	\int_{[0,t]^n} \sum_{\sigma \in S_n} \Theta^\sigma\, U(t_{\sigma(1)}) \cdots U(t_{\sigma(n)})dt_1 \cdots dt_n\\
	&=& n! \idotsint\limits_{0\le t_1 \le \cdots \le t_n \le t }U(t_1) \cdots U(t_n)dt_1 \cdots dt_n,
\end{eqnarray*}
which is a special case of the more general relation:
\begin{eqnarray}
	\int_{[0,t]^n} {\mathsf{T}}\big[U_1(t_1) \cdots U_n(t_n)\big] dt_1 \cdots dt_n &=& 
	\int_{[0,t]^n}\sum_{\sigma \in S_n} 
			\Theta^\sigma U_{\sigma(1)}(t_{\sigma(1)}) \cdots U_{\sigma(n)}(t_{\sigma(n)})dt_1 \cdots dt_n \nonumber\\
	&=& \sum_{\sigma \in S_n}\;\;  \idotsint\limits_{0\le t_{\sigma(1)} \le \cdots \le t_{\sigma(n)} \le t }
			U_{\sigma(1)}(t_{\sigma(1)}) \cdots U_{\sigma(n)}(t_{\sigma(n)})dt_1 \label{BSpZero1}
	\cdots dt_n,
\end{eqnarray}

Hence we may characterize the time-ordered exponential ${\mathsf{T}}\!\exp\bigl(\int_0^t U(s)ds\bigr)$ as the completely symmetrization of the n-fold iterated integral of the functions $U_i$. Note however that contrary to the commutative, e.g., scalar case, this does not coincide with the n-fold product of integrals. 

With motivations coming from questions arising from theoretical physics, Lam hinted in \cite{lam1998}  at a non-trivial rewriting problem of these fully symmetrized n-fold iterated integrals essentially in terms of products of single integrals and Lie brackets. As an example we state the identity:
\begin{equation}
\label{eq:pL1} 
	\int_{[0,t]^2} {\mathsf{T}}\big[U_1(t_1) U_2(t_2)\big] dt_1 dt_2 =
	\int_{0}^{t} U_1(t_1)dt_1 \int_{0}^{t}  U_2(t_2) dt_2 + \int_{0}^{t}  ad_{\int_{0}^{t_1}  U_2(t_2)dt_2}\big( U_1(t_1)\big) dt_1
\end{equation}
which follows from integration by parts. Note that the Lie bracket term on the right hand side of (\ref{eq:pL1}) defines a left pre-Lie product, which naturally enters the description of solutions of IVPs. 

As it turns out Lam's problem amounts to finding a general noncommutative analog of the classical Bohnenblust--Spitzer identity, i.e., identity (\ref{BSp1}), respectively (\ref{ncBSp}). We therefore find from (\ref{keyeq2}) for weight $\theta=0$ and denoting $\rhd_0=\rhd$:

\begin{proposition}
\begin{equation}
\label{eq:newTordExp} 
	{\mathsf{T}}\!\exp\Bigl(\int_0^t U(s)ds\Bigr)
	\!=\!  \un + \sum_{n>0} \frac{1}{n!}\sum\limits_{{k_1+\cdots + k_l=n}\atop {k_i>0}}  \frac{n!}{(\Pi_{j=1}^l (k_1+\cdots + k_j))} 
				\int_0^t U^{\rhd k_1}(s_1)ds_1  \cdots\!\!  \int_0^t U^{\rhd k_l}(s_l)ds_l,
\end{equation}
where $U^{\rhd 1}(t)=U(t)$, and: 
$$
	U^{\rhd k+1}(t)=(U^{\rhd k}(t))\rhd U(t) = ad_{\int_0^tU^{\rhd k}(s)ds}(U(t)).
$$
\end{proposition}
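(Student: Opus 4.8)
The plan is to derive the proposition directly from Theorem~\ref{thm:keyth} by specializing to weight $\theta = 0$, taking all generators equal, and then applying the Rota--Baxter map $R = I$ (the Riemann integral) to both sides. Recall from \eqref{eq:Torder} and the identity \eqref{BSpZero1} that the time-ordered exponential is precisely the generating series of the completely symmetrized iterated integrals, so that $\frac{1}{n!}\int_{[0,t]^n}{\mathsf{T}}[U(t_1)\cdots U(t_n)]\,dt_1\cdots dt_n$ equals $\idotsint_{0\le t_1\le\cdots\le t_n\le t} U(t_1)\cdots U(t_n)\,dt_1\cdots dt_n$, which in the notation \eqref{notationR} is exactly $R^{[n}(U)$ when $R = I$. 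Thus the coefficient of degree $n$ in ${\mathsf{T}}\!\exp(\int_0^t U(s)ds)$ is $R^{[n}(U)$, and it remains to expand $n!\,R^{[n}(U)$.

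First I would invoke \eqref{keyeq2}, which gives the expansion
$$
	n!R^{[n}(b) = \sum_{{s_1+\cdots+s_k=n}\atop{s_i>0}} c(s_1,\ldots,s_k)\, b^{\rhd_\theta s_1}\ast_\theta\cdots\ast_\theta b^{\rhd_\theta s_k},
$$
together with the explicit values $c(s_1,\ldots,s_k) = \frac{n!}{\prod_{j=1}^k(s_1+\cdots+s_j)}$ from \eqref{keyeq3}. Setting $\theta = 0$, the double product $\ast_0$ becomes $a\ast_0 b = R(a)b + aR(b)$, and applying $R = I$ to a $\ast_0$-product of the form $b^{\rhd s_1}\ast_0\cdots\ast_0 b^{\rhd s_k}$ yields, by the Rota--Baxter property $R(a\ast_0 b) = R(a)R(b)$, the ordinary product $\int_0^t U^{\rhd s_1}(s)ds\cdots\int_0^t U^{\rhd s_k}(s)ds$. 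Here one must check that $b^{\rhd_0 s}$, when instantiated at $b = U$ and pushed through $R = I$, matches $U^{\rhd s}$ as defined in the proposition: this is immediate from $a\rhd_0 b = [R(a),b] = ad_{R(a)}(b)$, so $U^{\rhd (k+1)}(t) = ad_{\int_0^t U^{\rhd k}(s)ds}(U(t))$, exactly as stated. Relabeling the compositions $(s_1,\ldots,s_k)$ as $(k_1,\ldots,k_l)$ then produces the claimed formula.

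The one genuine subtlety --- and the step I expect to require the most care --- is the interaction between applying $R$ and the $\ast_0$-product: one needs $R(b^{\rhd s_1}\ast_0\cdots\ast_0 b^{\rhd s_k}) = R(b^{\rhd s_1})\cdots R(b^{\rhd s_k})$, which follows from iterating $R(a\ast_0 b) = R(a)R(b)$, but one should be slightly careful that \eqref{keyeq2} is an identity for $\iota(b^n)$ inside $RB$ (equivalently in the double Spitzer algebra $\mathcal{C}$), and that $R^{[n}(b)$ on the left already carries its outermost $R$, so that $n!R^{[n}(b)$ is literally $R$ applied to the degree-$n$ piece of $\iota(b^n)/\text{(appropriate factor)}$; tracking the normalization through \eqref{recurs} and \eqref{keyeq2} is where an off-by-a-factorial slip could occur. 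A secondary point is the polarization/specialization direction: \eqref{keyeq2} is already stated for a single generator $b$, so no polarization is needed here --- we simply read off the generating series in $n$. Once these bookkeeping points are settled, the proof is the substitution $\theta = 0$, $b = U$, $R = I$ into \eqref{keyeq2}--\eqref{keyeq3}, divided by $n!$ and summed over $n$, which is exactly \eqref{eq:newTordExp}.
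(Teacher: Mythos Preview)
Your approach is correct and matches the paper's: the proposition is stated there as an immediate consequence of \eqref{keyeq2}--\eqref{keyeq3} specialized to weight $\theta=0$, which is exactly what you do.

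There is, however, a notational slip to fix. You write that the degree-$n$ term of ${\mathsf{T}}\!\exp$ is $R^{[n}(U)$, and later that ``$R^{[n}(b)$ on the left already carries its outermost $R$''. Neither is right: by definition \eqref{notationR}, $R^{[n}(b)=R(\cdots R(b)b\cdots)b$ has its outermost factor un-integrated, whereas the simplex integral $\idotsint_{0\le t_1\le\cdots\le t_n\le t}U(t_1)\cdots U(t_n)\,dt_1\cdots dt_n$ has all $n$ integrations performed and is $R^{[n]}(U)=R(R^{[n}(U))$. So the degree-$n$ term of the time-ordered exponential is $R^{[n]}(U)$, not $R^{[n}(U)$. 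This is precisely why you must apply $R$ to both sides of \eqref{keyeq2}: doing so turns the left side into $n!\,R^{[n]}(b)$ and, via $R(a\ast_0 b)=R(a)R(b)$, turns the right side into the ordinary product of integrals $\prod_j\int_0^t U^{\rhd k_j}(s)\,ds$. Once you correct the bracket from $R^{[n}$ to $R^{[n]}$ throughout, your bookkeeping worry about an extra factor of $R$ disappears and the argument is clean.
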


And from the more general point of view of Rota--Baxter algebras of arbitrary weight, we may therefore define the algebraic analog time-ordering  by:
$$
	{\mathsf{T}}R(b_1,\ldots, b_n) :=  \sum_{\sigma \in S_n} R^{[\sigma]}(b_1,\ldots, b_n). 
$$

%%%%%%%%%%%%%%%%%%%%%%%%%%%%%%%%%%%%%%%

\section{On Magnus and continuous BCH expansions}
\label{sect:Magnus}

In the present section, we analyse briefly how the pre-Lie point of view on Rota--Baxter algebras impacts some of the classical results in the theory of linear differential equations. This analysis relies largely on (and complements) recent works by F.~Chapoton on the links between free Lie and free pre-Lie algebras from the point of view of Lie idempotents \cite{Chap3}, by J.-C.~Novelli and J.-Y.~Thibon on the colored free quasi-symmetric functions interpretation of the noncommutative Bohnenblust--Spitzer formula \cite{NovThib}, and on joint works of the authors with F.~Chapoton \cite{ChapPat} and D.~Manchon \cite{EM}.

Recall that the Baker--Campbell--Hausdorff formula computes the logarithm of the solution of a linear differential equation in an algebra of time-dependent operators. Abstractly, this amounts to compute the logarithm of a time-ordered exponential ${\mathsf{T}}\!\exp$ or, equivalently, the logarithm of the series $\un+ \sum_n R(R^{[n}(x)) = \un+ \sum_n R^{[n]}(x)$ --in a weight zero Rota--Baxter algebra. The discrete case, that is, the computation of the logarithm of a product of exponentials in a noncommutative algebra, is a particular case of the continuous one (since a product of exponentials can always be obtained by solving a differential equation whose infinitesimal generator is a time-dependent step function, see e.g.~\cite{MielPleb} for details). There are several combinatorial solutions to these problems, surveyed for example in Reutenauer's monograph \cite{reutenauer}. We will be mainly interested here in the Mielnik--Pleba\'nski--Strichartz solution \cite{MielPleb,Strichartz} in terms of what is now called the series of Solomon idempotents (also referred to as eulerian or canonical idempotents in the literature) --see e.g.~\cite{ChapPat} for an account on Solomon idempotents in the context of enveloping algebras of pre-Lie algebras.

For a permutation $\sigma \in S_n$, let us write 
$$
	U_\sigma := \idotsint\limits_{0\le t_1 \le \cdots \le t_n \le t }U(t_{\sigma(1)}) \cdots U(t_{\sigma(n)})dt_1 \cdots dt_n.
$$
and $U_n:=U_{1_n}$, where $1_n \in S_n$ stands for the identity permutation. Recall that a permutation $\sigma$ has a descent in position $i$ if and only if $\sigma(i)>\sigma(i+1)$. The descent set of $\sigma $, denoted $Desc(\sigma)$, is the set of all those $i$'s, such that $\sigma$ has a descent in position $i$. Elementary calculations show that $U_nU_m=\sum_{(\sigma)} U_{\sigma}$, where the sum runs over all permutations $\sigma$ with at most one descent, in position $n$. More generally, (see e.g.~\cite{AG2,brouder} for details on such computations):
$$
	U_{n_1} \cdots U_{n_m} =\sum\limits_{\sigma \in S_{\eta_m} \atop Desc(\sigma)\subset \{\eta_1,\eta_2,\ldots ,\eta_m\}} U_\sigma,
$$
where $\eta_i:=n_1+\cdots + n_i$. The Mielnik--Plenba\`nski--Strichartz formula follows:
\begin{eqnarray*}
	 \log \big({\mathsf{T}}\!\exp(\int_0^t U(x)dx)\big)
	 &=&\sum_n\sum_{S\subset[n-1]}\frac{(-1)^{|S|}}{n}\sum_{\sigma\in S_n \atop Desc(\sigma)\subset S}U_\sigma \\
	 &=&\sum_n\sum_{S\subset[n-1]}\frac{(-1)^{|S|}}{n}{n-1\choose  |S|}^{-1}\sum_{\sigma\in S_n \atop Desc(\sigma)=S}U_\sigma.
\end{eqnarray*}
The last identity follows by a M\"obius inversion argument in the integer partitions lattice. See \cite{reutenauer} for a proof.
The elements: 
$$
	\sum_{S\subset[n-1]}\frac{(-1)^{|S|}}{n}{n-1\choose  |S|}^{-1}\sum_{\sigma\in S_n \atop Desc(\sigma)=S}\sigma
$$ 
in the symmetric group algebras are so-called Lie idempotents (Solomon's idempotents), i.e., they are idempotent and, when viewed as acting on the tensor algebra, map surjectively to the free Lie algebra~\cite{reutenauer}.

The same formula holds \it mutatis mutandis \rm for an arbitrary weight zero Rota--Baxter algebra. For subtler reasons, an analogous formula also holds in Rota--Baxter algebras of arbitrary weight. This is a consequence of the arguments in \cite{EGBP,EMP}, that show that the Spitzer algebra carries the structure of a cocommutative Hopf algebra, so that the arguments in \cite{pat} apply. In particular, recall from \cite{EGBP,EMP} that $\un+\sum_nR^{[n]}(x)$ is a group-like element in $\mathcal S$, and for the graded components $L_n(x)$ of $\log (\un+\sum_nR^{[n]}(x))$ (that is, $\sum_n L_n(x):=\log (\un+\sum_nR^{[n]}(x))$) the following formula holds:
\begin{equation}
\label{Hopflog}
	L_n(x)=\sum_{1\leq k\leq n}(-1)^{k-1}\Big(\sum_P\frac{I_P}{k}\Big)(R^{[n]}(x)),
\end{equation}
where the last sum runs over integer partitions $P=(p_1,\ldots,p_k)$ of $n = p_1+\ldots +p_k$, and $I_P:=I_{p_1}\star \cdots \star I_{p_k}$, where $I_n$ is the projection on the degree $n$ component of the Spitzer algebra. The product $\star$ stands for the convolution product of endomorphisms in the Spitzer algebra (viewed as a Hopf algebra). We refer the reader to \cite{pat} for details.

These results also hold in the double Spitzer algebra $\mathcal C$ (which is isomorphic as a Hopf algebra to the Spitzer algebra $\mathcal S$). In particular, equation~(\ref{Hopflog}) holds in $\mathcal C$ when the $R^{[n]}(x)$ are replaced by the $R^{[n}(x)$ (that is, the right hand side of the equation computes the degree $n$ component of $\log^{\ast_\theta} (\un+\sum_nR^{[n}(x))$, where the notation $\log^{\ast_\theta}$ means that the logarithm is computed with respect to the double product $\ast_\theta$ (\ref{def:RBdouble}). When the Rota--Baxter map is the Riemann integral and $x=U=U(t)$ is a time-dependent operator, notice that $\log^{\ast_\theta} (\un+\sum_nR^{[n}(U))$ is nothing but the derivative of the logarithm of the solution of the linear differential equation $\dot{X}=XU$.

The Magnus expansion on the other hand provides another computation of this derivative. Let us state directly the identity in terms of Rota--Baxter algebras.

\begin{theorem} {\rm{\cite{EM}}} \label{thm:pLMag}
Let $A$ be a Rota--Baxter algebra of weight $\theta \in k$. Assume that $x=x(a)$ is a solution of  $x =a+R(x)  a$. The element $\Omega'(a) \in P_A$, such that $\un + x =\exp^{*_\theta}\big(\Omega'(a)\big)$, satisfies:
\begin{equation}
\label{pLMagnus}
	\Omega'(a) = \frac{-\ell_{\Omega'(a) \rhd_\theta}}{e^{-\ell_{\Omega'(a) \rhd_\theta}} - 1}(a) 
	= a + \sum_{n > 0} (-1)^n\frac{B_n}{n!} \ell^{n}_{\Omega'(a) \rhd_\theta}(a).
\end{equation}
\end{theorem}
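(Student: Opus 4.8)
\textbf{Proof strategy for Theorem \ref{thm:pLMag}.}

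The plan is to derive the recursion for $\Omega'(a)$ from the standard trick of differentiating (in a formal parameter) the defining relation $\un + x = \exp^{*_\theta}(\Omega'(a))$, while keeping all computations inside the pre-Lie algebra $P_A$ and its enveloping algebra. First I would introduce a scaling parameter: replace $a$ by $\lambda a$, write $x_\lambda$ for the solution of $x_\lambda = \lambda a + R(x_\lambda)\lambda a$, and $\Omega_\lambda := \Omega'(\lambda a)$, so that $\un + x_\lambda = \exp^{*_\theta}(\Omega_\lambda)$. Differentiating the fixpoint equation with respect to $\lambda$ gives $\dot x_\lambda = a + R(\dot x_\lambda)\lambda a + R(x_\lambda)a$; the key point is to recognize that $\dot x_\lambda = (\un + x_\lambda)\ast_\theta a$ in the double-product algebra $(A_\theta,R)$. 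This is precisely the statement that $\un + x_\lambda$ solves the ``differential equation'' $\dot X = X \ast_\theta a$ with $X(0)=\un$, and it follows from $R(u \ast_\theta v) = R(u)R(v)$ together with $x_\lambda = \lambda a + R(x_\lambda)\lambda a$ unwound one step; this is the place where the Rota--Baxter identity enters and is, I expect, the main technical checkpoint rather than the main obstacle.

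Next I would invoke the classical enveloping-algebra computation of the derivative of a one-parameter exponential: in any Hopf algebra containing $\exp^{*_\theta}(\Omega_\lambda)$, one has
\begin{equation*}
	\frac{d}{d\lambda}\exp^{*_\theta}(\Omega_\lambda)
	= \exp^{*_\theta}(\Omega_\lambda) \ast_\theta \frac{1 - e^{-\mathrm{ad}^{\ast_\theta}_{\Omega_\lambda}}}{\mathrm{ad}^{\ast_\theta}_{\Omega_\lambda}}(\dot\Omega_\lambda),
\end{equation*}
where $\mathrm{ad}^{\ast_\theta}$ is the adjoint action for the bracket of $\ast_\theta$. Comparing with $\dot x_\lambda = (\un+x_\lambda)\ast_\theta a$ and cancelling the invertible group-like element $\exp^{*_\theta}(\Omega_\lambda)$ on the left yields
\begin{equation*}
	a = \frac{1 - e^{-\mathrm{ad}^{\ast_\theta}_{\Omega_\lambda}}}{\mathrm{ad}^{\ast_\theta}_{\Omega_\lambda}}(\dot\Omega_\lambda),
	\qquad\text{hence}\qquad
	\dot\Omega_\lambda = \frac{\mathrm{ad}^{\ast_\theta}_{\Omega_\lambda}}{1 - e^{-\mathrm{ad}^{\ast_\theta}_{\Omega_\lambda}}}(a).
\end{equation*}
The crucial reduction, and the real heart of the argument, is now to replace the adjoint action $\mathrm{ad}^{\ast_\theta}_{\Omega_\lambda}$ by the pre-Lie left multiplication $\ell_{\Omega_\lambda \rhd_\theta}$, i.e.\ by $y \mapsto \Omega_\lambda \rhd_\theta y$. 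This is exactly Lemma \ref{lem:GL2RB}: the Lie bracket of $\ast_\theta$ coincides with the Lie bracket of $\rhd_\theta$, and for a left pre-Lie product the bracket $[\Omega,y] = \Omega\rhd_\theta y - y\rhd_\theta\Omega$ can be simplified when acting on elements, using the pre-Lie relation, so that the generating function $\frac{z}{1-e^{-z}}$ evaluated at the bracket operator agrees with the same function evaluated at $\ell_{\Omega\rhd_\theta}$ when applied to $a$. Concretely this uses that all the iterated brackets appearing in the Bernoulli-number expansion can be rewritten, via repeated application of the pre-Lie identity, purely in terms of the operators $\ell_{\Omega\rhd_\theta}$; this is the standard ``pre-Lie Magnus'' reduction and is where one must be careful.

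Finally I would integrate back: since $\Omega_\lambda|_{\lambda=0}=0$ and $\dot\Omega_\lambda$ satisfies the above fixed-point recursion with $a$ replaced by the appropriate $\lambda$-scaled data, evaluating at $\lambda = 1$ and unfolding the generating function $\frac{z}{1-e^{-z}} = 1 + \sum_{n>0}(-1)^n \frac{B_n}{n!}(-z)^n$... more precisely $\frac{-z}{e^{-z}-1} = \sum_{n\ge 0}(-1)^n\frac{B_n}{n!}z^n$ with $B_1 = -\tfrac12$ absorbed into the sign convention, gives
\begin{equation*}
	\Omega'(a) = \frac{-\ell_{\Omega'(a)\rhd_\theta}}{e^{-\ell_{\Omega'(a)\rhd_\theta}}-1}(a)
	= a + \sum_{n>0}(-1)^n\frac{B_n}{n!}\,\ell^n_{\Omega'(a)\rhd_\theta}(a),
\end{equation*}
which is (\ref{pLMagnus}). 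The main obstacle I anticipate is not the formal manipulation of exponentials but the passage from the $\ast_\theta$-adjoint action to the pre-Lie operator $\ell_{\Omega'(a)\rhd_\theta}$: one has to make sure that the generating-function identity survives this substitution, and that requires genuinely using the pre-Lie relation (not merely Lie-admissibility) together with the fact that we only ever apply these operators to the fixed element $a$, not to arbitrary elements. Everything else is bookkeeping with the Rota--Baxter identity and the Guin--Oudom enveloping-algebra structure already recalled in Sections \ref{sect:UPL} and \ref{sect:RBA}.
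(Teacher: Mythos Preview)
The paper does not give its own proof of this theorem: it is quoted from \cite{EM} and stated without argument. So there is no ``paper's proof'' to match; the question is whether your sketch actually works.

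It does not, and the failure is precisely at the step you flag as ``the real heart of the argument''. You propose to derive the classical derivative-of-exponential identity
\[
a=\frac{1-e^{-\mathrm{ad}^{\ast_\theta}_{\Omega_\lambda}}}{\mathrm{ad}^{\ast_\theta}_{\Omega_\lambda}}(\dot\Omega_\lambda)
\]
and then replace $\mathrm{ad}^{\ast_\theta}_{\Omega_\lambda}$ by $\ell_{\Omega_\lambda\rhd_\theta}$. But this substitution is illegitimate already at first order: $[\Omega,a]=\Omega\rhd_\theta a - a\rhd_\theta\Omega$, which differs from $\ell_{\Omega\rhd_\theta}(a)=\Omega\rhd_\theta a$ by the nonzero term $a\rhd_\theta\Omega$. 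The pre-Lie identity controls associators $(x\rhd y)\rhd z - x\rhd(y\rhd z)$, not the discrepancy between $\mathrm{ad}$ and left multiplication; no amount of ``repeated application of the pre-Lie relation'' collapses iterated brackets $[\Omega,[\Omega,\ldots,[\Omega,a]\ldots]]$ into $\ell_{\Omega\rhd_\theta}^n(a)$. The Lie--Magnus and pre-Lie--Magnus recursions are two genuinely different fixed-point equations with the same solution; one is not obtained from the other by a change of notation.

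There are two further problems. First, your claimed ODE $\dot x_\lambda=(\un+x_\lambda)\ast_\theta a$ is false: expanding the right-hand side gives $a+R(x_\lambda)a+x_\lambda R(a)+\theta x_\lambda a$, whereas differentiating $x_\lambda=\lambda a+R(x_\lambda)\lambda a$ yields $a+R(x_\lambda)a+\lambda R(\dot x_\lambda)a$; at order~$\lambda$ these read $aR(a)+\theta a^2$ versus $R(a)a$, which disagree in a noncommutative algebra. Second, even granting an identity of the form $\dot\Omega_\lambda=F(\Omega_\lambda)(a)$, ``integrating back'' and setting $\lambda=1$ gives a relation for $\dot\Omega_\lambda|_{\lambda=1}$, not for $\Omega'(a)=\Omega_\lambda|_{\lambda=1}$; the theorem asserts a fixed-point equation for $\Omega'(a)$ itself. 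The proof in \cite{EM} avoids all of this by working directly with the dendriform half-shuffles $\uparrow,\downarrow$ (so that $x=a+x\downarrow a$) and the corresponding dendriform exponential/logarithm, never passing through the adjoint representation.
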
  

The left multiplication operator $\ell_{u \rhd_{\theta}}$ is defined by $\ell_{u \rhd_{\theta}}(v):=u \rhd_{\theta} v$, and the $B_i$ are the Bernoulli numbers. The first few terms of $\Omega'(a)$ are:  
\begin{equation*}
	\Omega'(a) = a + \frac 12 a\rhd_\theta a - \frac 14 (a \rhd_\theta  a) \rhd_\theta a 
				- \frac 1{12} a \rhd_\theta (a \rhd_\theta a) + \cdots.
\end{equation*}

The equivalence between these two solutions of the continuous Baker--Campbell--Hausdorff problem is all but obvious from a combinatorial point of view. In fact, the classical proofs of the Mielnik--Pleba\'nski--Strichartz and Magnus formulae usually rely on largely different techniques, i.e., the combinatorics of iterated integrals and symmetric groups versus integral identities such as Duhamel's formula, respectively. In our opinion the perspective offered by the Grossman--Larson algebra and its canonical map to the free Rota--Baxter algebra may enlighten this relationship.

Recall first from \cite{ChapPat} that, in the $GL$ algebra over a generator $a$ (the enveloping algebra over the free Lie algebra over one generator), the following property holds: for $\Omega':=\log^\ast(\exp(a))$, where the logarithm is computed with respect to the Grossman--Larson (associative) product, and the exponential with respect to the commutative product, we have:
\begin{equation}
\label{GLeq}
	\Omega' = a\curvearrowleft \Big(\frac{\Omega'}{\exp(\Omega')-1}\Big).
\end{equation}

On the other hand, $\exp(a)$ is a group-like element in $GL$ (this can be proved directly, but also follows from the fact that its image in the free Rota--Baxter algebra belongs to the double Spitzer algebra, and is the group-like series $\un+ \sum_nR^{[n}(a)$). In particular, equation~(\ref{Hopflog}) holds in $GL$, when the $R^{[n]}(a)$ are replaced by the $a^n$ (that is, the right hand side of the equation computes the degree $n$ component of $\log^{\ast} (\exp(a))$). 

The existence of two different formulae for the computation of $\log^{\ast} (\exp(a))$ in $GL$ can be understood as a formal property of pre-Lie algebras and their enveloping algebras. In fact, these formulae hold not just in $GL$ but in any enveloping algebra of a pre-Lie algebra.

\smallskip 

As a conclusion we state that Theorem \ref{thm:keyth} (and other results in the present article following from this Theorem) has then several consequences. First, each of these formulae in $GL$ implies the corresponding formula in the free Rota--Baxter algebra (Mielnik--Pleba\'nski--Strichartz's as well as Magnus'). Notice that the converse statement is also true since the canonical map from the $GL$ algebra to the free Rota--Baxter algebra is an embedding, so that these formulae stated in Rota--Baxter algebras are equivalent to the same formulas stated in enveloping algebras of pre-Lie algebras. Second, in a more subtle way, it results that the equivalence of the Mielnik--Pleba\'nski--Strichartz and Magnus solutions of the continuous Baker--Campbell--Hausdorff problem that could hardly be understood in the classical approach to these solutions can now be understood on structural grounds, i.e., as a consequence of the formal properties of pre-Lie algebras.

%%%%%%%%%%%%%%%%%%%%%%%%%%%%%%%%%%%%%%%

\section{Products of time-ordered exponentials}
\label{sect:prodToExp}

Before concluding this article, we would like to indicate the surprising mathematical transparency following from lifting the calculation of the solution of a linear differential equation into $GL$. We will do so by calculating the product of $\exp(x)$ and $\exp(y)$ in $GL$.

\begin{proposition}
For $x,y \in \mathcal{D}$, the product of $\exp(x)$ and $\exp(y)$ in $(GL,*)$ is given by:
$$
	\exp(x) * \exp(y) = \exp\big(y + \{x; \exp(y)\}\big).
$$
\end{proposition}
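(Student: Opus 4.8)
The plan is to compute $\exp(x)\ast\exp(y)$ by using the Guin--Oudom formula (\ref{gsProduct}) for the product $\ast$ together with the brace notation $\{x;F\} = x\curvearrowleft F$, and to recognize the result as $\exp$ of the claimed exponent with respect to the commutative product. First I would expand $\exp(x)\ast\exp(y) = \sum_{n\geq 0}\frac{1}{n!}\, x^n \ast \exp(y)$, where $x^n$ denotes the $n$-fold commutative product of $x$ with itself. Applying (\ref{gsProduct}) with all $t_i = x$ and the second forest being (a symmetric power of) $y$, the function $f\colon\{1,\dots,m\}\to\{0,\dots,n\}$ distributes the $y$'s among the ``slot'' $F_0$ (which stays as a commutative factor) and the $n$ copies of $x$ (onto which a bunch of $y$'s gets grafted via $\curvearrowleft$). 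Since everything is symmetric in the $y$'s and in the copies of $x$, after the dust settles the summand $x^n\ast\exp(y)$ reorganizes as $\exp(y)$ (coming from the $F_0$ part, summed over how many $y$'s land there) times $\frac{1}{n!}$-weighted $n$-th commutative power of $\sum_{k\geq 0}\frac{1}{k!}(x\curvearrowleft y^k) = x\curvearrowleft \exp(y) = \{x;\exp(y)\}$.

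Carrying this through: the key identity I would isolate is
\[
	\exp(x)\ast G = G_1 \cdot \exp\bigl(x\curvearrowleft G_2\bigr)\bigl|_{\text{suitably interpreted}},
\]
or more precisely, for $G=\exp(y)$ which is group-like for the deshuffle coproduct ($\Delta\exp(y)=\exp(y)\otimes\exp(y)$), one gets $\exp(x)\ast\exp(y) = \exp(y)\cdot\exp\bigl(x\curvearrowleft\exp(y)\bigr) = \exp\bigl(y+\{x;\exp(y)\}\bigr)$, the last step being legal because $y$ and $\{x;\exp(y)\}$ multiply via the \emph{commutative} product $\cdot$, so $\exp(A)\exp(B)=\exp(A+B)$ holds trivially. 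The cleanest way to make the middle step rigorous is to use the defining property of the Guin--Oudom construction that $\ast$ makes $S(L)$ into the enveloping algebra, together with Definition~\ref{def:forest}: from $FG\curvearrowleft H = (F\curvearrowleft H_1)(G\curvearrowleft H_2)$ and the fact that trees/generators are primitive, one shows by induction on $n$ that $x^n \curvearrowleft \exp(y)$ and, dually, how $\exp(x)$ acts; alternatively one computes $\exp(x)\ast\exp(y)$ directly by checking both sides satisfy the same recursion in the degree of $x$.

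Concretely, the induction I would run is on $n$ for the partial sums $\bigl(\sum_{j\le n}\tfrac{1}{j!}x^j\bigr)\ast\exp(y)$, using the identity $x^{n+1} = x^n\cdot x$ and the ``Leibniz-type'' rule $x^n\cdot x \,\ast\, \exp(y)$: by (\ref{forestaction}) and group-likeness of $\exp(y)$, grafting distributes, and one extracts a factor. The bookkeeping amounts to: each $y$-copy independently chooses to be a commutative spectator (contributing to the leading $\exp(y)$) or to attach to the pool building up $\{x;\exp(y)\}$; the multinomial coefficients from expanding $\exp(x)=\sum x^n/n!$ and grouping the attached $y$'s convert exactly into the coefficients of $\exp\bigl(y+\{x;\exp(y)\}\bigr)$.

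\textbf{Main obstacle.} The delicate point is the combinatorial reorganization making the ``spectator vs.\ attached'' dichotomy precise: one must check that summing (\ref{gsProduct}) over all $f$ and over $n$ really factors as a product of two exponentials and that no cross-terms or overcounting survive. I expect this is cleanly handled by invoking group-likeness of $\exp(y)$ for the deshuffle coproduct — which linearizes Definition~\ref{def:forest} into $\,F\curvearrowleft\exp(y) $ behaving multiplicatively — rather than by a brute-force index chase; the remaining identity $\exp(y)\cdot\exp(\{x;\exp(y)\}) = \exp(y+\{x;\exp(y)\})$ is then immediate since the outer product is commutative.
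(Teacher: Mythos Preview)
Your proposal is correct and, once the hedging is stripped away, takes essentially the same route as the paper: use that $\exp(y)$ is group-like for the deshuffle coproduct so that $\exp(x)\ast\exp(y)=(\exp(x)\curvearrowleft\exp(y))\cdot\exp(y)$, then apply Definition~\ref{def:forest} with $\Delta(\exp(y))=\exp(y)\otimes\exp(y)$ to get $x^n\curvearrowleft\exp(y)=(x\curvearrowleft\exp(y))^n$, whence $\exp(x)\curvearrowleft\exp(y)=\exp(\{x;\exp(y)\})$, and conclude by commutativity of the outer product. The paper executes exactly these three steps in as many lines; your brute-force index-chase alternative via (\ref{gsProduct}) is unnecessary once you commit to the group-likeness argument you yourself identify as cleanest.
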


Note that the exponential is with respect to the commutative product in $GL$, whereas the Grossman--Larson (associative) product $*$ is noncommutative. Definition \ref{def:forest} implies:
$$
	\exp(x) * \exp(y) = (\exp(x)  \curvearrowleft  \exp(y))\exp(y),
$$ 
since $\exp(y)$ is group-like in $GL$. Using (\ref{forestaction}), we see that:
$$
	\exp(x)  \curvearrowleft  \exp(y) = \sum_{n\ge 0} \frac{x^n \curvearrowleft  \exp(y)}{n!} 
	= \sum_{n\ge 0} \frac{(x\curvearrowleft  \exp(y))^n}{n!} = \exp(\{x;  \exp(y)\}).
$$
This eventually yields for the product of two time-ordered exponentials:  
\begin{equation}
\label{prodTordExp}
	\exp(x) * \exp(y) = (\exp(x)  \curvearrowleft  \exp(y))\exp(y) =  \exp(y + \{x;  \exp(y)\}).
\end{equation}

The argument of the exponential on the right hand side is of particular interest. Remember, we are essentially calculating the product of solutions of two linear differential equations. Indeed, using (\ref{GLeq}) we deduce from $\exp(y)=\exp^*(\log^*(\exp(y)))$ that 
$$
	\exp(y)=\exp^*(\Omega'(y)).
$$
Such that $\{x;  \exp(y)\} = \{x; \exp^*(\Omega'(y))\}$, which therefore gives:
$$
	\{x; \exp^*(\Omega'(y))\} = \sum_{n\ge 0} \frac{\{x; (\Omega'(y))^{*n}\}}{n!} 
					     = \sum_{n\ge 0} \frac{1}{n!} x \curvearrowleft (\Omega'(y))^{*n}.
$$
This reduces to:
$$
	\{x; \exp^*(\Omega'(y))\} = x + \sum_{n > 0} \frac{1}{n!} (\cdots (( x\curvearrowleft \Omega'(y))\curvearrowleft \Omega'(y)) 
			   		\cdots \curvearrowleft \Omega'(y)) = e^{r_{\curvearrowleft\Omega'(a)}}x,
$$
where, as before, $r_{\curvearrowleft a}(b):=b \curvearrowleft a$. From this we deduce that:
$$
	y + \{x; \exp^*(\Omega'(y))\} = y +  e^{r_{\curvearrowleft\Omega'(y)}}x.
$$
The right hand side of the last equality defines a product $\#$:
\begin{equation*}
	a\# b=a+e^{r_{\curvearrowleft  \Omega'(a)}}b.
\end{equation*}
It appeared already in \cite{AG}, and provides a way to understand the relationship between the pre-Lie Magnus expansion (\ref{GLeq}) and the discrete Baker--Campbell--Hausdorff formula:
\begin{equation*}
	BCH(x,y) := x+y +\frac 12[x,y] + \frac 1{12}([x,[x,y]]+[y,[y,x]]) + \cdots ).
\end{equation*} 
We refer the reader to Manchon's expository article \cite{Manchon} for more details.

%%%%%%%%%%%%%%%%%%%%%%%%%%%%%%%%%%%%%%%

%%%%%%%%%%%%%%%%%%%%%%%%%%%%%%%%%%%%%%%

\end{document}